\providecommand{\U}[1]{\protect\rule{.1in}{.1in}}
\providecommand{\U}[1]{\protect\rule{.1in}{.1in}}
\newcommand{\s}{s}
\newtheorem{theorem}{Theorem}
\newtheorem{lemma}[theorem]{Lemma}
\newcommand\grdel[1]{{\color{black}{#1}}}
\newcommand\gui[1]{{\color{black}{#1}}}
\newcommand\amf[1]{{\color{black}{#1}}}
\begin{document}
  
\title{Resonance in rarefaction and shock curves: \\local analysis and numerics of\\ the continuation method.}

\author{\ A. C. Alvarez
\thanks{Instituto Nacional de Matem\'{a}tica Pura e Aplicada, Estrada Dona
Castorina 110, 22460-320 Rio de Janeiro, RJ, Brazil. E-mail:
amaury@impa.br} , \ G.T. Goedert \thanks{Instituto Nacional de
Matem\'{a}tica Pura e Aplicada, Estrada Dona Castorina 110, 22460-320 Rio de
Janeiro, RJ, Brazil. E-mail: ggoedert@impa.br}, \ D. Marchesin\thanks{Instituto Nacional de
Matem\'{a}tica Pura e Aplicada, Estrada Dona Castorina 110, 22460-320 Rio de
Janeiro, RJ, Brazil. E-mail: marchesi@impa.br}
}

\maketitle

\begin{abstract}

In this paper we describe certain crucial steps in the development of an algorithm for finding the Riemann solution in systems of conservation laws. We relax the classical hypotheses of strict hyperbolicity and genuine nonlinearity of Lax. First, we present a procedure for continuing wave curves beyond points where characteristic speeds coincide, i.e., at wave curve points of maximal codimensionality.	
This procedure requires strict hyperbolicity on both sides of the coincidence locus. Loss of strict hyperbolicity is regularized by means of a Generalized Jordan Chain, which serves to construct a four-fold submanifold structure on which wave curves can be continued. Second, we analyze the case of loss of genuine nonlinearity. We prove a new result: the existence of composite wave curves when the composite wave traverses either the inflection locus or an anomalous part of the non-local composite wave curve. In this sense, we find conditions under which the composite field is well defined and its singularities can be removed, allowing use of our continuation method. Finally, we present numerical examples for a non-strictly hyperbolic system of conservation laws.	
	
\end{abstract}

\tableofcontents

\section{Introduction}

In this work, we develop algorithms for finding analytical and numerical the Riemann solution while relaxing the requirements of Lax \cite{lax1957hyperbolic}, namely strict hyperbolicity and genuine nonlinearity hypotheses. We study shock and rarefaction curves arising in systems of conservation laws in which there are states whose characteristic speeds coincide. Indeed, examples of such models are found in \cite{key95x,alvarez2017analytical,alvarez2017analytical1}. Furthermore, we take into account composite rarefaction-shock wave curves in situations in which analytical difficulties were not {fully considered previously. In this new retrospective, we have found a theoretical approach in \cite{muller2001existence} and a joint analytical-computational treatment of \cite{dahmen2005riemann} restricted to the Euler equation. 

Wave curves are fundamental tools in the construction of Riemann solutions,  see \cite{liu1974riemann,liu1975riemann,azevedo1995multiple} and references therein. In this case, according to a celebrated result of Lax (\cite{lax1957hyperbolic}), these curves in phase space correspond to sequences of shock, rarefaction and composite wave curves. The above mentioned work required strict hyperbolicity and genuinely nonlinear characteristic fields to prove the existence and uniqueness of wave curve equivalent to a Riemann solution. A general procedure for the construction of wave curves, even where Lax's hypothesis are violated, is still needed.
A more complete description of the wave curve method can be found in \cite{lax1957hyperbolic,azevedo1995multiple,lambert2010riemann,dahmen2005riemann}, with theoretical justification provided by  \cite{liu1974riemann,liu1975riemann,issacson1992global,wendroff1972riemann} and reference therein.

 We study rarefaction and shock wave curves \amf{in a neighborhood of the coincidence locus} for a general system of conservation laws 
 \begin{equation}
 \displaystyle{\frac{\partial G(U)}{\partial t}+\frac{\partial F(U)}{\partial x}=0,}\label{eqt}
 \end{equation}
 where $U=U(x,t):\mathbb{R}\times\mathbb{R}^+\longrightarrow \Omega\subset \mathbb{R}^n$, the accumulation functions $G(U)=(G_1(U),\cdots,G_{n}(U)):\Omega \longrightarrow \mathbb{R}^{n}$, and the flux functions $F(U)=(F_1(U),\cdots,\\F_{n}(U)):\Omega \longrightarrow \mathbb{R}^{n}$ are known.

A Riemann problem consists of a Cauchy (initial value) problem governed by equations of type \eqref{eqt} with initial data 
\begin{equation}
U(x,t=0) = \left \{ \begin{matrix} U_L & \mbox{if } x < 0,
\\ U_R & \mbox{if } x > 0.\end{matrix}\right.
\label{eqrt2}
\end{equation}

 Riemann solutions correspond to states L or R that give rise to wave curves in the phase space describing the transition between all intermediate states. Rarefactions are continuous self-similar solutions of \eqref{eqt}, therefore they are represented by
\begin{equation}
U=\widehat{U}(\xi),\quad \text{with} \quad \xi=x/t.
\label{eqt2}
\end{equation}
Substituting \eqref{eqt2} into system \eqref{eqt} we obtain the rarefaction curve by using the solution of the generalized eigenvalue problem

\begin{equation}
Ar=\lambda Br, \quad \text{where} \quad A={\partial F}/{\partial U}, \quad B={\partial G}/{\partial U}.
\label{ge1}
\end{equation}
The eigenvector $r$ is parallel to $d\widehat{U}/d\xi$, so the rarefaction curves are tangent to the characteristic field given by the normalized eigenvector $r$.  We consider such a rarefaction curve in detail when it appears close to a coincidence locus of codimension one. We assume that system \eqref{eqt} is strictly hyperbolic on both sides of the locus, i.e. that generalized eigenvalues in \eqref{ge1} are real and distinct, see \cite{lax1957hyperbolic}. The case where an elliptic region appears was studied in a previous work \cite{mailybaev2008hyperbolicity}.

A shock wave is a traveling discontinuity in a (weak) solution of system \eqref{eqt} given by
\begin{equation}
U(x,t) = \left \{ \begin{matrix} U^- & \mbox{if } x < st,
\\ U^+ & \mbox{if } x > st,\end{matrix}\right.
\label{eqrt2a}
\end{equation}
where $s$ is a real constant called the shock speed. Solution \eqref{eqrt2a} is a piecewise constant weak solution to the Riemann problem  defined by \eqref{eqt} and \eqref{eqrt2} if these states satisfy the Rankine-Hugoniot
condition (\cite{Shearer89}):
\begin{equation}
F (U^-) - F (U^+) = s(G(U^-) - G(U^+)).
\end{equation}

In the case that  strict hyperbolicity is lost, we study the continuation of rarefaction and shock wave curves assuming that matrix $B$ in the generalized eigenvalues problem \eqref{ge1} is singular; the case of $B=I$ was studied for a simpler setting in \cite{mailybaev2008hyperbolicity}. We study the situation in which generalized eigenvalues $\lambda_1,\cdots,\lambda_n$ of \eqref{ge1} are real but there is a point $U_o$ where two characteristic speeds coincide, i.e. $\lambda_i(U_o)=\lambda_{i+1}(U_o)$. At this point, the corresponding rarefaction curves associated to the eigenvector fields $r_i$ and $r_{i+1}$ intersect and the generalized eigenproblem  \eqref{ge1} has an eigenvalue $\lambda_o$ of multiplicity two with only one associated eigenvector $r_o$. Thus, while applying the wave curve method, the following question arises: how to continue the rarefaction curves $\mathcal{R}_i$ and $\mathcal{R}_{i+1}$ beyond the point $U_o$? To answer this question, we must take into account  the behavior of eigenvalues at both sides of the coincidence manifold $\mathcal{E}$.

The existence of a sole eigenvector at intersection points of two rarefaction curves implies that the tangent space on the coincidence locus has at most dimension $n-1$. Therefore, there are not enough directions to continue two intersecting curves in general. Instead, we complete the dimension of tangent space up to $n$ by means of an appropriate system of coordinates in the neighborhood of the coincidence locus, which guarantees the continuation of curves. This method has sound theoretical basis and therefore serves to construct a Riemann solution between two states situated at different sides of a coincidence locus.
 
We also study genuine nonlinearity loss, i.e. points where there is a characteristic speed with null directional derivative along the vector field $r=r_i(U)$, i.e.  $\lambda^{\prime}_i(r)= \nabla \lambda_i \cdot r$ = 0, and the rarefaction curve generically stops. To cross this inflection locus, the rarefaction curve needs to be coupled with a characteristic shock curve. This corresponds to the construction of a composite curve, which arises in state space during the construction of the solution of a Riemann problem in a non-strictly hyperbolic system of conservation laws. We also study the case where inflection locus intersects states of coinciding eigenvalues. We address the related difficulties and propose a procedure that resolves each situation. 
	
The main issue addressed here is the justification of the construction of composite curve by certain continuation methods. The rarefaction curve $\mathcal{R}_k$ for a field $k$ consists of the integral curve along the properly oriented right eigenvector $r_k$ associated to eigenvalue $\lambda_k$, i.e. the parametrized curve is found as solution of the initial value problem
\begin{equation}
\label{eq:rarefaction_curve}
\frac{d \mathcal{R}_k}{d \xi}=r_k(R_k(\xi)), \quad \mathcal{R}_k(0)=U^-.
\end{equation}
Generically, a rarefaction curve is required to have monotone characteristic speed. The work \cite{wendroff1972riemann} of Wendroff proved that shock velocities were bounded by the characteristic velocities of rarefaction waves, leading to a transition from a shock to a rarefaction wave. In order to continue a wave curve past an inflection point (i.e. where $d \lambda_k/ d \xi =0$), Liu introduced in \cite{liu1975riemann} the concept of composite wave curve to couple the curves in state space; which was essential in for solving more general hyperbolic conservation laws. 

The work \cite{muller2001existence} studied a situation that was not considered in \cite{liu1975riemann}, proving the existence and uniqueness of the composite curve near of a special point of the corresponding characteristic field. Building upon these results, we show the existence of local and non-local composite wave curves near certain singularities. We also propose an algorithm for the construction of these curves. More details on composite wave curves can be found in \cite{furtado1991structural}. We argue that the procedure used in \cite{muller2001existence} is also applicable to our case in order to prove the existence of a unique non-local composite curve when the Implicit Function Theorem cannot be used.

The proposed algorithms were implemented and tested through a Riemann solver developed in Matlab together with an improved version of the RPN C++ library. From the numerical point of view, the construction of correct wave curves requires: an ODE solver with appropriate stopping criteria; contour plot subroutines to find Hugoniot loci; a continuation method and appropriate data structures to represent and manipulate   curves. 

The paper is organized as follows. Section \ref{chainA} presents the generalized Jordan chain, which we use to construct a new system of coordinates related to the Jacobian matrices of flux and accumulation functions. Then, this is used to describe a procedure to regularize the  singularity appearing when two characteristic speeds coincide at a submanifold of codimension one. Section \ref{proceM} presents an algorithm to compute the continuation of a rarefaction curve beyond the coincidence of characteristic speeds; we apply this method to a model presenting such a phenomenon as an example. In the Section \ref{comp} we present a construction of a composite curve in the classical sense, as explained in \cite{liu1975riemann}, but consider anomalous cases allowed by the non-strictly hyperbolic setting. We provide analysis justifying the construction of composite curves and an algorithm to be integrated to a Riemann solver. 
Our conclusions can be found in Section \ref{conclu}. Appendix \ref{apend1} presents formulas with the relationship between the generalized Jordan chain and the Jacobian of flux and accumulation functions. Moreover describes the formulas from versal deformation theory useful in numerical implementations.

\section{Singularity at the coincidence locus}
\label{chainA}
 In this paper, we assume that system \eqref{eqt} is strictly hyperbolic on both sides of the coincidence manifold and that the matrix $B$ in the generalized eigenvalue problem \eqref{ge1} is singular with linearly independent rows. We utilize a generalized Jordan chain to remove the singularities of the submanifold that appear when two rarefaction curves intersect. To do so, we require completing the dimension of the fundamental manifold up to $n$ at the coincidence locus (see \cite{issacson1992global}). 

This is achieved by lifting the solution curves from the state space $U$ to the higher dimensional extended space $(U,\lambda)$ (see Figure \ref{figFa}), where each eigenvalue defines a hypersurface. In the neighborhood of the  coincidence locus in the extended space, we construct a new coordinate system consisting of four charts. In these new coordinates, we apply versal deformation results to the generalized Jordan chain in order to recover a full set of directions necessary to continue the wave curves beyond the coincidence singularity. Our construction generalizes previous results in \eqref{ge1} for the case $B$ proportional to the identity matrix, the details of which can be found in \cite{mailybaev2008hyperbolicity,mailybaev2000transformation,seyranian2003multiparameter}.

\begin{figure}
	\centering
	\includegraphics[width=0.69\textwidth]{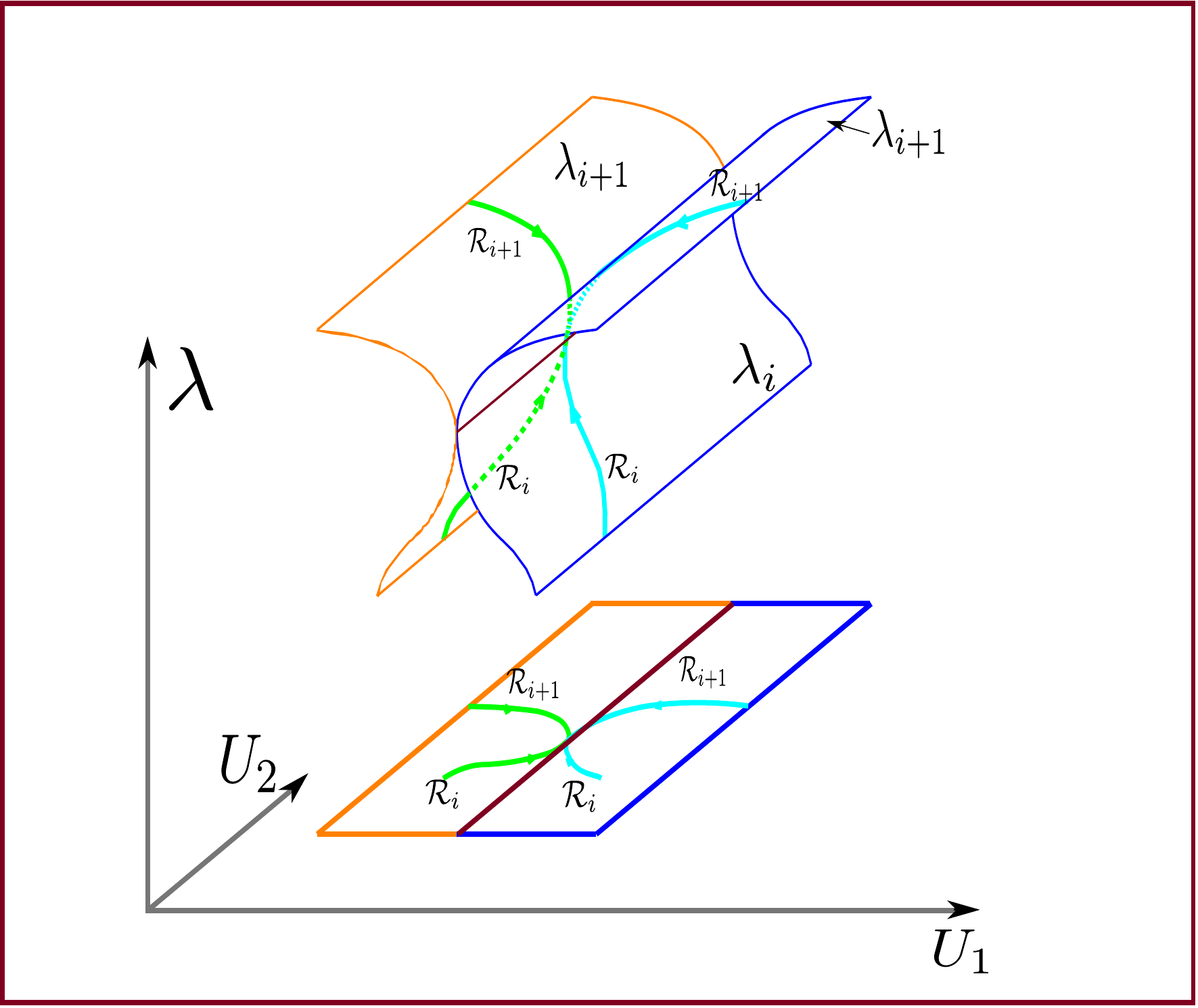}
	\caption{Three dimensional representation of new coordinate system for two dimensional case, as well as projections onto state space. $\mathcal{R}_i$ and $\mathcal{R}_{i+1}$ are the rarefaction curves of the $i$-th and $i+1$-th families.}
	\label{figFa}
\end{figure} 

\subsection{Generalized Jordan Chain}
\label{chain}

We assume that there exists $U_o$ such that the generalized eigenvalues of \eqref{ge1} satisfy $\lambda_o=\lambda_i(U_o)=\lambda_{i+1}(U_o)$ for some $i \in \{1,\ldots,n\}$. Let $A_o$ denote $A(U_o)$ for a non-singular matrix at $U_o$. We consider the case when $B_o \equiv B(U_o)$ is singular with linearly independent rows. In this case, the matrix $B_oB_o^T$ is invertible (since $rank(B_oB_o^T)=rank(B_o)$; see \cite{gentle2007matrix,ben2003generalized}), so that we can  define the right Moore-Penrose pseudoinverse of $B_o$ as $B_o^{\dagger}=B_o^T(B_oB_o^T)^{-1}$, for which $B_oB_o^{\dagger}=I_{n}$  (see \cite{elden1982weighted,hansen1998rank}), so that the matrix $B_o^{\dagger}$ is well defined on the subspace $R(A_o)$,   where $R(A_o)$ denotes
the range of matrix $A_o$.

Taking $M_o=B_o^{\dagger}A_o$, the generalized eigenvalue problem in \eqref{ge1} can be regularized as
 \begin{equation}
 M_or=\lambda r. \\
  \label{ge2}
 \end{equation}
We consider $M_o$ in the system \eqref{ge2} restricted to $N(B_o)^\perp$, where $N(B_o)$ is the kernel of $B_o$. Since the matrix $M_o$ forms a Jordan block
(double eigenvalue with a single eigenvector) then  there exists a single eigenvector $r_o \in N(B_o)^\perp$ associated to vector $r_1 \in N(B_o)^\perp$ determined by the \textit{Generalized
Jordan Chain} equations at the point $U_o$ (\cite{seyranian2003multiparameter}), i.e.
\begin{align}
M_or_o=\lambda_o r_o, \quad
M_or_1=\lambda_o r_1 + r_o.
\label{gen3}
\end{align}
Furthermore, there exists a left eigenvector $\bar{l}_o=l_oB_o$ and an associated left eigenvector $\bar{l}_1=l_1B_o$ such
that  
\begin{align}
\bar{l}_oM_o=\lambda_o \bar{l}_o, \quad
\bar{l}_1M_o=\lambda_o \bar{l}_1 +\bar{l}_o,
\label{gen4}
\end{align}
where the vectors $r_o,r_1,\bar{l}_1$ and $\bar{l}_o$ satisfy the relations
\begin{equation}
\bar{l}_or_o=0,\quad \bar{l}_or_1=\bar{l}_1r_o\neq 0, \quad \bar{l}_or_1=1, \quad \bar{l}_1r_o=1,\quad \bar{l}_1r_1= 0.
\end{equation}
%with the normalization condition 
%\begin{equation}
%\bar{l}_or_1=1, \quad \bar{l}_1r_o=1,\quad \bar{l}_1r_1= 0.
%\label{gen3f}
%\end{equation}
Equation \eqref{gen3} on space $N(B_o)^\perp$ can be rewritten as 
\begin{align}
B_o^{\dagger}A_or_o=\lambda_o r_o, \quad
B_o^{\dagger}A_or_1=\lambda_o r_1 +r_o,
\label{gen3a1}
\end{align}
or
\begin{align}
A_or_o=\lambda_o B_or_o, \quad
A_or_1=\lambda_o B_or_1 +B_or_o,
\label{gen3a1m}
\end{align}
using the right psedoinverse $B_o^{\dagger}$ ($B_oB_o^{\dagger}=I_n$).
To obtain $r_o$ and $r_1$ in system \eqref{gen3a1} we use the numerical method developed in \cite{mailybaev2006computation}.

\subsection{Local coordinate system at coincidence locus}
\label{chain1}

Rarefaction points form an
$n$-dimensional submanifold $\mathcal{C}$ of the fundamental manifold $\mathcal{W}$ called the characteristic manifold (see \cite{issacson1992global}). Associated to this submanifold there is a characteristic field defined as follows: any point $U \in \mathcal{C}$ corresponds to an eigenvector $r(U)\in T_U(\mathcal{C})$ of \eqref{ge1} with eigenvalue (characteristic speed) $\lambda$.  
When the system \eqref{eqt} is strictly hyperbolic, the manifold $\mathcal{C}$ is an n-sheeted
covering manifold for the state space. But in general systems the projections have singularities, for example when the generalized eigenvalues problem in \eqref{ge1} has multiple eigenvalues, i.e. coinciding characteristic speeds. Some ways to regularize
this singularity by means of new systems of coordinates can be found in \cite{palmeira1988line,issacson1992global,mailybaev2008hyperbolicity}.

The coincidence locus $\mathcal{E}$ constitutes an $n-1$-dimensional submanifold of the characteristic manifold $\mathcal{C}$ (see \cite{issacson1992global}). Therefore, points belonging to $\mathcal{E}$ present singularities with implications in the construction of admissible wave curves. Here, we regularize this singularity and find an asymptotic solution in a neighborhood of a point $U_o$ belonging to the coincidence locus. Our regularization method consist of a generalization of the one described in \cite{mailybaev2008hyperbolicity}.

We consider matrix $B$ in system \eqref{ge1} singular, while the case where $B$ is the identity matrix was solve in \cite{mailybaev2008hyperbolicity,mailybaev2008lax}. Regularization provides full access to all directions through a smooth field at coincidence points. These directions are required in order to construct every possible rarefaction curves. First, we take the smooth functions 
\begin{equation}
s(U)=(\lambda_i(U)+\lambda_{i+1}(U))/2-\lambda_o, \quad p(U)=(\lambda_i(U)-\lambda_{i+1}(U))^2/4,
\label{dos}
\end{equation}
which satisfy (see \cite{arnold2012geometrical,mailybaev2000transformation})
\begin{equation}
M(U)R(U)=R(U)N(U),  \quad 
\label{fin1}
\end{equation}
with
 \begin{equation}
 M(U)=B(U)^{\dagger}A(U),
 \end{equation}
 and
\begin{equation}
N(U)=\begin{bmatrix}
\lambda_o+s(U) & 1 \\
p(U) & \lambda_o+s(U)
\end{bmatrix}.
\end{equation}

Using that $B(U)B(U)^{\dagger}=I$, equation \eqref{fin1} can be rewritten as
\begin{equation}
A(U)R(U)=B(U)R(U)N(U),
\label{unoa1}
\end{equation}
where $R(U)=[R_o(U),R_1(U)]$ is a $m \times 2$ real matrix that depends smoothly
on $U$, while $s(U)$ and $p(U)$ are smooth real scalar functions such that
\begin{equation}
B(U_o)R_o(U_o)=B_or_o, \quad B(U_o)R_1(U_o)=B_or_1.
\end{equation}

Notice that $p \equiv 0$ defines the coincidence locus, which locally divides the space in two regions $\Omega_1$ and $\Omega_2$ where the system \eqref{eqt} is hyperbolic. 
The hyperplane tangent to the common boundaries of $\Omega_1$ and  $\Omega_2$ is given by
\begin{equation}
n_1 \cdot (U-U^*)=0, \quad \text{and} \quad n_2 \cdot (U-U^*)=0,
\end{equation}
where $\vec{n_1}=\triangledown p(U^*)$ and $\vec{n_2}=-\vec{n_1}$.

The characteristic surfaces corresponding to each eigenvalue family can be more easily represented and studied when lifted to the space $(U,\lambda)$ (e.g. Figures \ref{figFa} and \ref{figF}) . We can define a new system of coordinates onto these two surface branches. 

On branch $\Omega_1(U)$, which corresponds to $\lambda_i$ and where $n_1 \cdot (U-U^*) < 0$, we define 
	\begin{equation}
	\label{parameters}
	%\xi^2=(\lambda_i(U)-\lambda_{i+1}(U)),\quad \eta(U)=s(U),
	\xi^2=p(U),\quad \eta(U)=s(U), \quad \text{such that} \quad \lambda_{i}(U)=\lambda_o-\xi(U)+\eta(U).
	\end{equation}
\noindent Analogously, on branch $\Omega_2$ where $n_1 \cdot (U-U^*) > 0$, we have 
\begin{equation}
\lambda_{i+1}(U)=\lambda_o+\xi(U)+\eta(U).
\label{e22}
\end{equation}
It can be shown that put together these two eigenvalue sheets define a smooth surface in $(U, \lambda)$ space. Note that coincidence of the eigenvalues occurs for $\eta = 0$.

	The corresponding eigenvector $r$ associated to $\lambda_{i}$ and $\lambda_{i+1}$ are
	\begin{equation}
	\label{eige1}
	\begin{split}
	r(U) & =B(U)R(U)\begin{bmatrix}
	1 \\
	\pm \sqrt{(\lambda_i(U) - \lambda_{i+1}(U))}
	\end{bmatrix}\\
	& = B(U)R_o(U)\pm \sqrt{(\lambda_i(U) - \lambda_{i+1}(U))}B(U)R_1(U).
	\end{split}
	\end{equation}

The above transformation provides a natural coordinate system that can be used in order to complete the dimension up to $n$: $\xi$, $\eta$ and $n-2$ components of $U$, e.g. $(\xi, \eta, U_1, ... , U_{n-2})$. However, it is convenient to study the submersion of differentiable manifold given by the eigenvalues sheets into the $n+2$ dimensional space characterized by $(\xi, \eta, U)$, since it contains the smooth surface $\Omega$. In this space, \eqref{parameters} and \eqref{e22} provide a natural parametrization for the eigenvalues and eigenvectors. Moreover, rarefaction curves correspond to projections of integral curves on this surface onto state space $U$. The submersion is given by
\begin{equation}
\varphi(U,\lambda)=(\xi,\eta,U).
\end{equation}
In this new coordinate system we have that the surface in the lower part contains the rarefaction of the $i$-th family  while the upper part contains the rarefaction curve of the $(i + 1)$-th family (see Figure \ref{figFa}).

%\begin{eqnarray}
%\label{parametrized_eigenvectors2}
%r^a(U)= B(U)R_o(U)\pm \sqrt{(\lambda_i(U) - \lambda_{i+1}(U))}B(U)R_1(U).
%\label{eige1}
%\end{eqnarray}

 \begin{figure}
 	\centering
 	\includegraphics[width=0.79\textwidth]{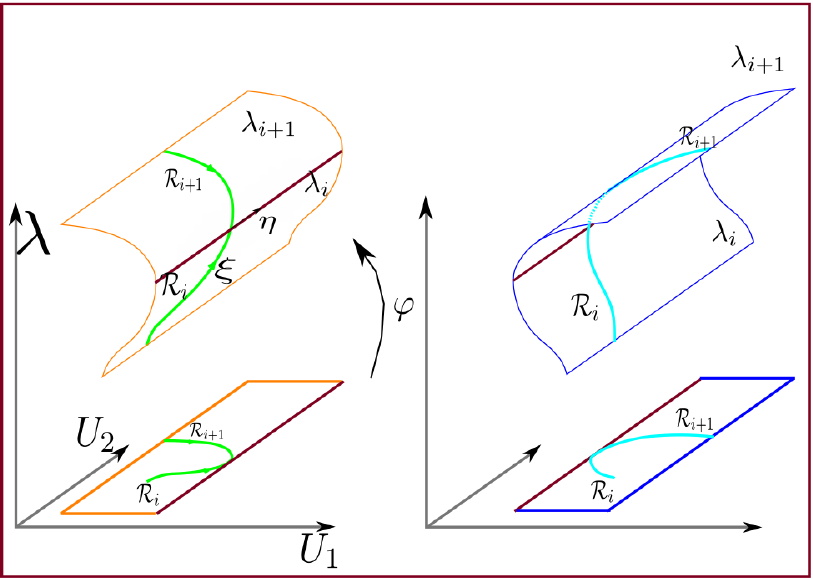}
 	\caption{Two dimensional representation of new coordinate system with separated branch surface. The charts $\varphi$ lifts the state space $(U,\lambda)$
 		into $(\xi,\eta,U)$ space.}
 	\label{figF}
 \end{figure}
 
 \subsection{System parameters in the regularized manifold}
 \label{chain21}
 
 In this section, we establish the relationship  between the regularized manifold and the system parameters in the neighborhood of the coincidence locus. We obtain the derivatives of $s(U)$, $p(U)$ and $R(U)$ in terms of the Jacobian and Hessian matrices of the accumulation $G$ and flux $F$ functions. Using these relations, we define a procedure for regularizing the eigenvector in \eqref{eige1}.
 
 %, which we use to leave the coincidence locus.
 
These derivatives are expressed in terms of $A_o$, $B_o$, $B_o^{\dagger}$ and the generalized Jordan chain $r_o$, $r_1$, $l_o$ and $l_1$ given in Section \ref{chain}. As proven in Appendix \ref{apend1}, the derivatives of $s$ and $p$ are given by (we leave it implied that all derivatives are evaluated at $U_o$)
	\begin{equation}
	\frac{\partial p}{\partial U_k}=l_o^T\frac{\partial A}{\partial U_k}r_o-\lambda_o l_o^T\frac{\partial B}{\partial U_k}r_o,
	\end{equation}
\begin{equation}
\frac{\partial s}{\partial U_k}=\frac{1}{2}\left(l_o^T\frac{\partial A}{\partial U_k}r_1+l_1^T \frac{\partial A}{\partial U_k}r_o\right)-\frac{\lambda_o}{2}\left(l_o^T\frac{\partial B}{\partial U_k}r_1+l_1^T \frac{\partial B}{\partial U_k}r_o\right).
\label{eq2a}
\end{equation} 
Let us define $Z=A_o-\lambda_o B_o+B_or_1l_1B_o$. The following holds (see Appendix \ref{apend1})
\begin{equation}
\frac{\partial R_o}{\partial U_k}=\frac{\partial s}{\partial U_k}r_1+\frac{\partial p}{\partial U_k}r_o+Z^{-1}\left(\lambda_o\frac{\partial B}{\partial U_k}r_o-\frac{\partial A}{\partial U_k}r_o\right),
\label{df4a}
\end{equation}
\begin{equation}
\frac{\partial R_1}{\partial U_k}=\frac{\partial s}{\partial U_k}r_o+Z^{-1}\left (\frac{\partial B}{\partial U_k}r_o+B_o\frac{\partial R_o}{\partial U_k}-\frac{\partial A}{\partial U_k}r_1+\lambda_o\frac{\partial B}{\partial U_k}r_1\right).
\label{df5b1}
\end{equation}
Using Taylor's formula to first order, $R_o(U)$ and $R_1(U)$ are approximated by

	\begin{equation}
		\begin{split}
	& R_o(U) = r_o+\sum_{k=1}^{n}\frac{\partial R_o}{\partial U_k}(U^k-U_o^k) + o(||U-U_o||^2), \\
	& R_1(U) = r_1+\sum_{k=1}^{n}\frac{\partial R_1}{\partial U_k}(U^k-U_o^k) + o(||U-U_o||^2).
	\end{split}
	\label{verssal}
	\end{equation}

where $U=(U^1,\dots,U^n)$ and $U_o=(U_{o}^1,\dots,U_{o}^n)$.

With the above parameter functions, it is possible to obtain asymptotic solutions of system \eqref{eqt} in the neighborhood of the coincidence locus. Using  $\vec{n_1}=\triangledown p(U^*)$, $\vec{n_2}=-\vec{n_1}$ and  equation \eqref{chanap} for $\partial p/\partial U_k$ in Appendix \ref{apend1}, we have a formula to calculate $\vec{n_1} \cdot r_o$ and $\vec{n_2} \cdot r_o$. When $\vec{n_1} \cdot p \neq 0$ and 
$\vec{n_2} \cdot p \neq 0$, we have the following asymptotic solutions $U_i$, $i=1,2$ on
$\Omega_1$ and $\Omega_2$ (derived in \cite{mailybaev2008hyperbolicity}):
\begin{equation}
U_1(\lambda)=U_o\pm\frac{(\lambda-\lambda_o)^2}{\vec{n_1} \cdot r_o} r_o+o((\lambda-\lambda_o)^2),
\label{dosp1}
\end{equation} 
and
\begin{equation}
U_2(\lambda)=U_o\pm\frac{(\lambda-\lambda_o)^2}{\vec{n_2} \cdot r_o}r_o+o((\lambda-\lambda_o)^2).
\label{dosp}
\end{equation} 

%esats expressoes da o valro da rarefaccao perto de Uo,
%para lam>lam0 e uma cosa em omega1
%para lam<lamo e outra coisa.

\section{Continuation beyond violation of strict hyperbolicity}
\label{proceM}

In this section, we describe an algorithm to continue rarefaction curves beyond the coincidence of characteristic speeds at point $U_o$, extending the capabilities of the wave curve method. A natural way of continuing a rarefaction starting at $U^- \in \Omega_1$ is to take
a close point $U_o^i=U_o+\epsilon r_o$, where $U_o^i \in \Omega_2$, $r_o$ is the eigenvector at $U_o$ and $\epsilon$ is a fixed small parameter. Then, we calculate the eigenvectors $r_i(U_o^i)$ and $r_{i+1}(U_o^i)$ associated with the eigenvalues $\lambda_i$ and $\lambda_{i+1}$ satisfying the condition $\triangledown \lambda_k \cdot r_k (U_o^i) >0$, with $k=i,i+1$ (see Figure \ref{comp1ab}). Afterwards, we continue the integration starting at a point $U_o^i$ by using the field associated to the family $i$ or $i+1$. In this way, at least two rarefaction curves can be constructed until some stopping criterion is satisfied.  We proceed in a similar fashion on the other side of the coincidence locus when the left state $U^-$ is situated on $\Omega_2$, i.e. for the family $i$ at a point  $U_o^i=U_o-\epsilon r_o$ for $U_o^i \in \Omega_1$. If the value of parameter $\epsilon$ is not small enough, error accumulation in the procedure can lead to a wrong right state $U_R$ in the Riemann solution.

The procedure described above is simple and it works in many cases. However when some displacement by $\epsilon$ in the direction of the generalized eigenvector $r_1=r_{i+1}(U_o^i)$ is taken (see Figure \ref{comp1ab}) the rarefaction sometimes still being almost tangent to the coincidence locus. Therefore, when the curve solver starts with initial direction almost parallel to the coincidence locus $\Gamma$, the integrator is not capable of capturing the true trajectory of the rarefaction curve. 
Another difficulty that appears with the choice of $\epsilon$ is that an inappropriate value may take $U_o^i$ took far from the true rarefaction trajectory and produce a vector for integration of the wave curve in the opposite direction.

These numerical difficulties appear due to resonance phenomenon, which stands for a coincidence of characteristics speeds along a manifold or isolated points in the literature of Riemann problems. In the neighborhood these states, the behavior of waves is strongly sensitive to the curvature of the coincidence manifold. The construction of solutions involving resonant waves requires careful numerical analysis in order to develop a robust algorithm.

One of the reasons for this behavior is that the parameter $\epsilon$ does not contain information about of flux and accumulation and therefore does not represent an indicator of variation of the solution at each point.
\begin{figure}
	\begin{center}
		\includegraphics[scale=0.50]{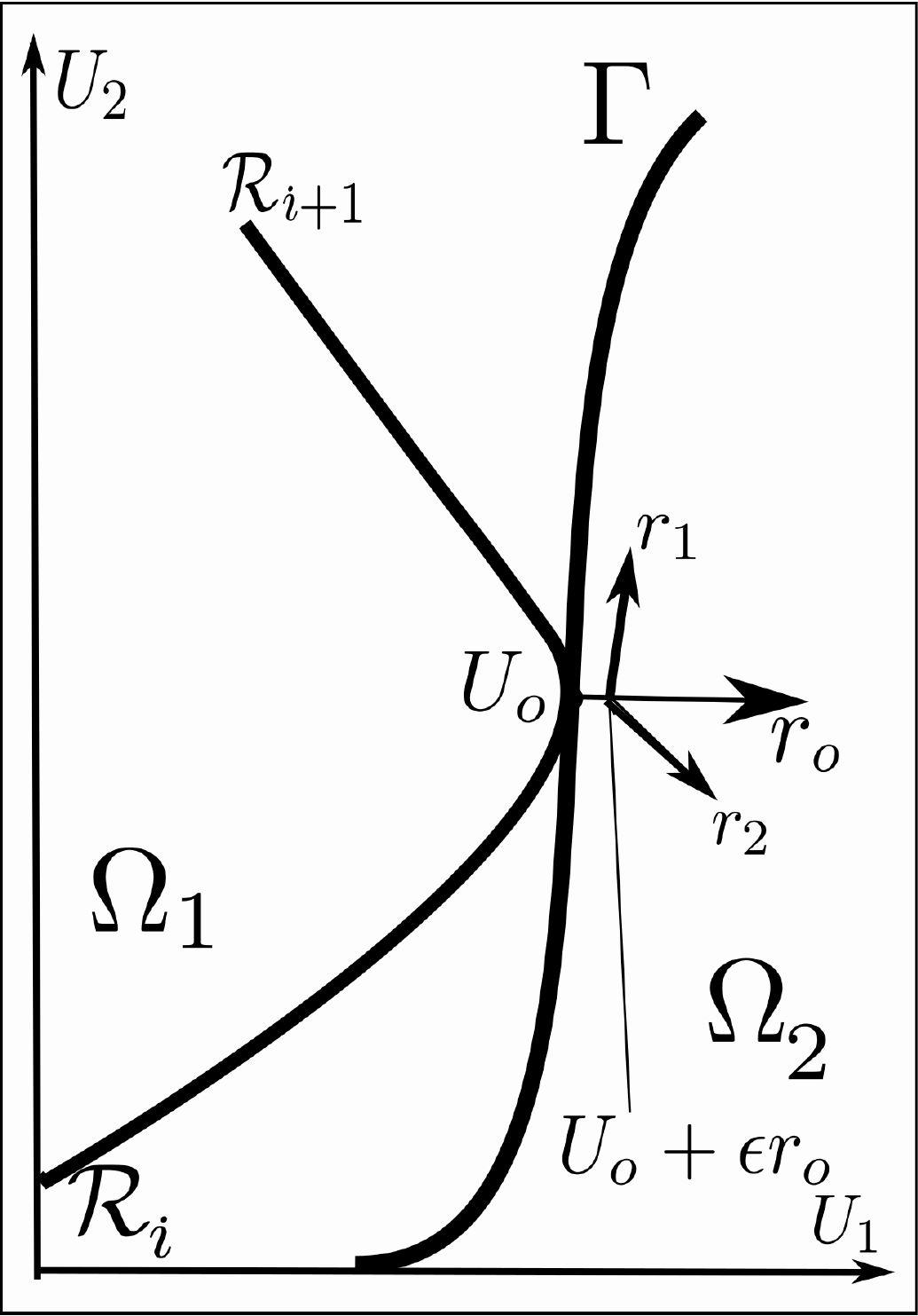}
		\caption{Here $\Gamma$ represents the coincidence between eigenvalues $\lambda_i$ and $\lambda_{i+1}$, while $r_1$ and $r_2$ are the eigenvectors of the generalized problem \eqref{ge1} at point $U_o+\epsilon r_o$. }
		\label{comp1ab}
	\end{center}
\end{figure}
Improving upon this numerical difficulty, we use the coordinate system described in Sections \ref{chain1} and \ref{chain21} as well as the relations derived there.  We choose the value of parameter $\epsilon$ in order to vary the eigenvalue in a way that the new eigenvector approximates the tangent to the rarefaction curve accurately; one the considerations involved in the choice of $\epsilon$ is to take into account the local curvature.

We present the following \textbf{Algorithm $3.1$} to provide a continuation of the rarefaction beyond the coincidence locus by assuming that $\triangledown \lambda_i \cdot r_o > 0$. Moreover, the vectors $R_o$ and $R_1$ in \eqref{verssal} are taken such that $R_o \cdot r_o > 0$ and  $R_1 \cdot r_o > 0$. The procedure below is for $U_L \in \Omega_1$, meaning that \gui{the rarefaction originates from $\Omega_1$ and continue inside region $\Omega_2$}.\\

%either at inflection or coincidence points or in the physical boundary
\gui{\noindent{\textbf{Algorithm 3.1:}}}
\begin{itemize}
	\item[1)] Construct a rarefaction curve of family $i$ solving ODE \eqref{RH4} starting at $U_L \in \Omega_1$ and stopping at the coincidence point $U_{o}$;
	
	\item[2)] Use formula \eqref{dosp} to calculate $U_o^2=U_2(\lambda)$ in $\Omega_2$, with $\lambda-\lambda_o=\epsilon$ for a fixed $\epsilon$. 
	
	\item[3a)]Compute the eigenvector $r_i (U_o^2)$, taking a direction such that $\triangledown \lambda_i \cdot r_i (U_o^2) >0$. If $r_i (U_o^2) \cdot r_o > 0$ then continue the rarefaction wave by solving ODE $d U/ d\xi=r_i(U)$ with $U(0)=U_o^2$ until some stopping criterion is satisfied;
	
	\item[3b)] Compute the eigenvector $r_{i+1} (U_o^2)$ and $R_1(U_o^2)$ (see formula in \eqref{verssal}) with directions satisfying $\triangledown \lambda_{i+1} \cdot r_{i+1} (U_o^2) >0$. If $r_{i+1} (U_o^2) \cdot r_o > 0$ and $R_1(U_o^2) \cdot r_{i+1} (U_o^2) >0 $ then continue the wave curve with the solution of $d U/ d\xi=r_{i+1}(U)$ with $U(0)=U_o^2$ and redefined $r_{i+1}(U_o^2)=R_1(U_o^2)$, until some stopping criterion is satisfied;
	
\end{itemize}	

\textbf{Algorithm $3.1$}  can be similarly adapted for wave curves crossing from $\Omega_2$ to $\Omega_1$ (when $U_L \in \Omega_2$) by using formula \eqref{dosp} to calculate $U_o^1=U_1(\lambda)$ as following

\begin{itemize}
	
	\item[4)] Construct a rarefaction curve of family $i$ solving ODE \eqref{RH4} starting at $U_L \in \Omega_2$ and stopping at the coincidence point $U_{o}$;
	
	\item[4a)] Use the formula \eqref{dosp1} to calculate $U_o^1=U_1(\lambda)$ in $\Omega_1$, with $\lambda-\lambda_o=\epsilon$ for a fixed $\epsilon$;
	
	\item[4b)]Compute the eigenvector $r_i (U_o^1)$, taking a direction such that $\triangledown \lambda_i \cdot r_i (U_o^1) >0$. If $r_i (U_o^1) \cdot r_o > 0$ then continue the rarefaction	wave by solving ODE $d U/ d\xi=r_i(U)$ with $U(0)=U_o^1$ until some stopping criterion is satisfied;

	\item[4c)]Compute eigenvector $r_{i+1} (U_o^1)$, taking directions such that $\triangledown \lambda_{i+1} \cdot r_{i+1} (U_o^1)$ $>0$. If $r_{i+1} (U_o^1) \cdot r_o > 0$ and $R_1(U_o^1) \cdot r_{i+1} (U_o^1) >0 $ (region $\Omega_1$) then continue the rarefaction curve by solving ODE $d U/ d\xi=r_i(U)$ with $U(0)=U_o^1$ until some stopping criterion is satisfied.
	
\end{itemize}

Since the vector $r$ is an eigenvector, so is $-r$; we must be careful in choosing the correct direction for the eigenvector in order for the procedure to work.
For this reason, we take the Jordan chain vector $r_1$ with direction such that  $\lambda_i^\prime (r_1) > 0$ in coincidence locus.

\textbf{Algorithm $3.1$} improves the continuation of rarefaction beyond the coincidence locus and expresses a theoretical argument to the existence of \gui{wave curves} after a coincidence locus. The major challenge left is the choice of an appropriate value for $\epsilon$, which must be made by the user while taking into account the parameters of the model. From a numerical point of view, the choice of $\epsilon$ as $\lambda-\lambda_o$ together with the choice of $U_o^1$ as an asymptotic solution of the rarefaction curve close to coincidence locus, guarantees that we obtain a new point $U_o^1$ more accurately avoiding some of the above mentioned difficulties.

The vectors $R_o$ and $R_1$ in formula \eqref{verssal} are obtained approximately from the first order truncation of the Taylor series and the eigenvector $r_i$ at a point $U_o^1$ close to coincidence point. This approximation of first order can be inaccurate if users supply an inappropriate choice for parameter $\epsilon$, but considering higher order terms can be numerically impractical. 

%We have numerical evidence that an appropriate starting field of the rarefaction at point $U_o^1$ is efficiently obtained by taking a combination of $R_1$ and $r_i(U_o^1)$.
	
\subsection{Rarefaction followed by rarefaction of another family}

A sequence of rarefaction and shock waves is represented in state space as a concatenation of wave curves. From here on, we use the notation $\mathcal{R} \rightarrow \mathcal{S}$ in order to say that a shock curve $\mathcal{S}$ is concatenated after a rarefaction wave $\mathcal{R}$.

A rarefaction curve can be followed by a rarefaction from another family if strict hyperbolicity is lost. When the rarefaction curves $\mathcal{R}_i$ and $\mathcal{R}_{i+1}$ cross the surface $\Gamma$ and meet at the point $U_o$, Algorithm $3.1$ reproduces the following possibilities:

\begin{itemize}
	\item When $\lambda_i^\prime (r_o) \neq 0$, $\mathcal{R}_i \in \Omega_1$ is continued beyond $U_o$ in one of the following sequences:
	
	a) if $\lambda_i^\prime (r_o) > 0$ in $\Omega_2$, then $\mathcal{R}_i \rightarrow \ \mathcal{R}_i$, with $\mathcal{R}_i \in \Omega_2$;
	
	b) if $\lambda_{i+1}^\prime (r_o) > 0$ in $\Omega_2$, then $\mathcal{R}_i \rightarrow \ \mathcal{R}_{i+1}$, with $\mathcal{R}_{i+1} \in \Omega_2$;
	
	c) if $\lambda_{i+1}^\prime (r_o) > 0$ in $\Omega_1$, then $\mathcal{R}_i \rightarrow \ \mathcal{R}_{i+1}$, with $\mathcal{R}_{i+1} \in \Omega_1$;
 
 \item When $\lambda_i^\prime (r_o) = 0$, in order to continue $\mathcal{R}_i \in \Omega_1$ beyond $U_o$:

d) if $\lambda_{i+1}^\prime (r_o) > 0$ in $\Omega_2$, then $\mathcal{R}_i \rightarrow \ \mathcal{R}_{i+1}$ for $\mathcal{R}_{i+1} \in \Omega_2$;

e) if $\lambda_{i+1}^\prime (r_o) > 0$ in $\Omega_1$, then $\mathcal{R}_i \rightarrow \ \mathcal{R}_{i+1}$ for $\mathcal{R}_{i+1} \in \Omega_1$.

\end{itemize} 

\begin{figure}
	\begin{center}
		\includegraphics[scale=0.39]{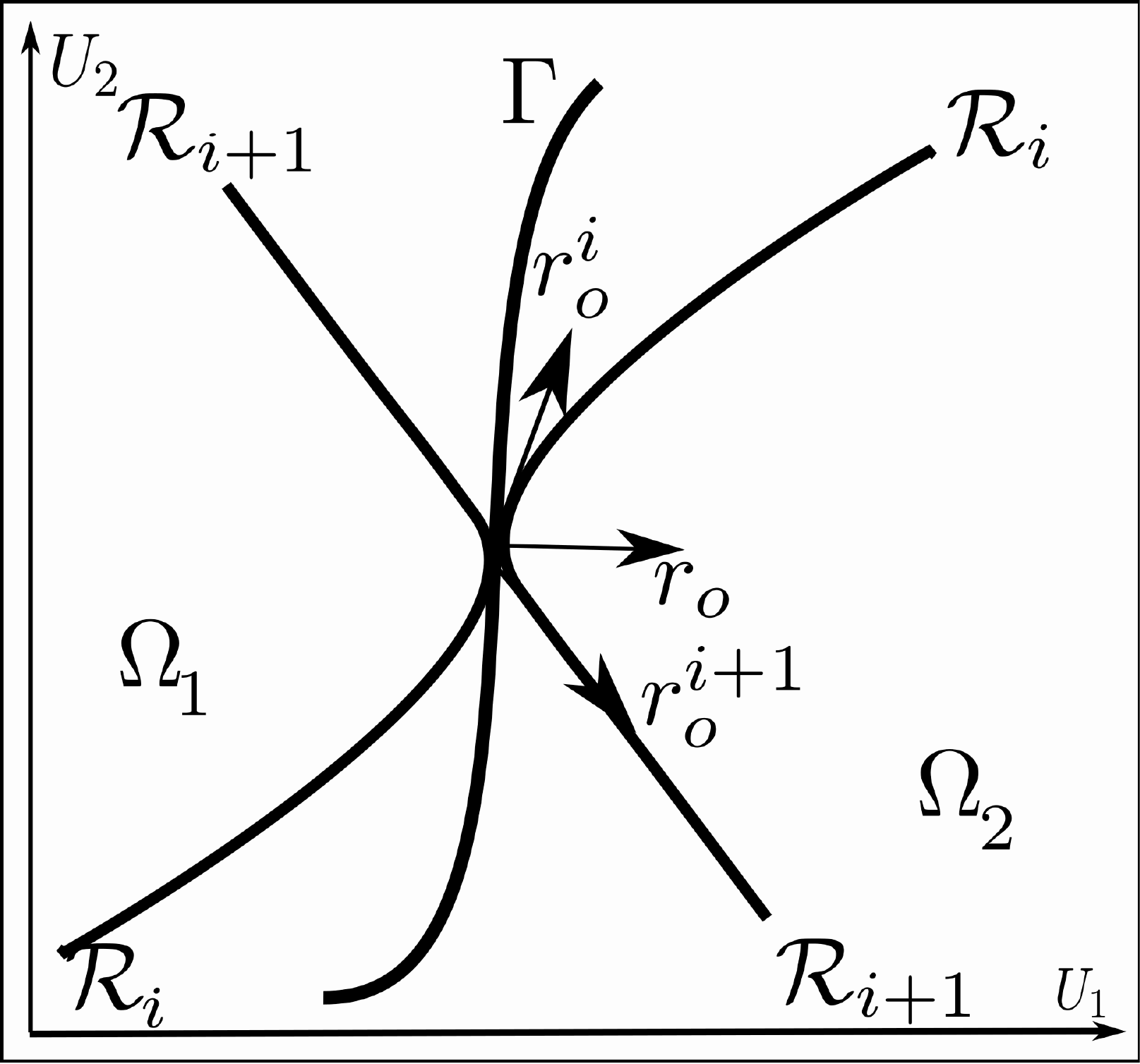}
		\caption{Rarefaction curves for two dimensional case. The arrows indicate the growth of the eigenvalues. Here $r_o^i$ and $r_o^{i+1}$ are the projections of $R_o$ and $R_1$ on phase space $U$. }
		\label{comp1a}
	\end{center}
\end{figure}

\subsection{Composites: Rarefactions followed by shocks} 
It is also possible to continue a rarefaction with shock curve, forming a pair of concatenated rarefaction and shock curves.
 
A shock wave curve for a fixed left state $U^-$ is formed by the set of right states 
\begin{equation}
\mathbb{H}(U^-)=\{U^+ : F (U^-) - F (U^+) - s(G(U^-) - G(U^+))=0\},
\label{hl1}
\end{equation}
where $s$ is the shock velocity (see details in \cite{matos2015bifurcation}).

%We study this waves in details in Sections \ref{secomp}.
We have the following types of continuation for $\mathcal{R}_i$ beyond the coincidence point $U_o$:

(1)  $\mathcal{R}_i \rightarrow \ \mathcal{S}_i$, with $\mathcal{R}_i \in \Omega_2$;

(2) $\mathcal{R}_i \rightarrow \ \mathcal{S}_{i+1}$, with $\mathcal{S}_{i+1} \in \Omega_2$;

(3) $\mathcal{R}_i \rightarrow \ \mathcal{S}_{i+1}$, with $\mathcal{R}_{i+1} \in \Omega_1$.

There is another situation where the continuation can be a contact. The procedure to construct the pairs of wave curves is similar.

When either $\lambda_i^\prime (r_o) =0$ or $\lambda_{i+1}^\prime (r_o) =0$ occurs, the corresponding rarefaction stops at the point $U_o$. The continuation in this case require of the composite wave curve, which is studied in details in Section \ref{comp}.

\amf{
	\subsection{Numerics for the construction of Hugoniot curve}
	
	In the Riemann solver, we find a Hugoniot curve using two methods. The first is a direct search for points satisfying condition \eqref{hl1}, by means of sign changes of the functions in an appropriate grid. The second consists of the continuation method, which is useful in the construction of wave curve. For this reason, we describe here the general idea of the continuation algorithm implemented in our solver.
	
	Let us denote by $H(U,s)=F (U^-) - F (U) - s(G(U^-) - G(U))$ the function defined on $\mathbf{R}^{n+1}$ which takes values in $\mathbf{R}^{n}$. We want to compute a curve satisfying $H(x)=0$,
	with $x=(U,s)$. Starting by the point $U^o=U^-$, we find a new point of curve as follows
	\begin{equation}
	U^1=U^o+hr_o,
	\end{equation}
	where $h$ is certain stepsize and $r_o$ is normalized tangent vector at $U^o$, i.e., $(\partial H/\partial U)r_o=0$. The new point on the curve is obtained by a Newton-like procedure to search the intersection point of equation $H(x)=0$ with the plane $g(x)=0$, where
	$g(x)=(x-U^1) \cdot r_o$. In each step we update $U^o$ by $U^1$ and $r_o$ by the normalized eigenvector at $U^1$. The iterative process is repeated until convergence.  
}

\subsection{Condition for stopping at coincidence locus}
\label{coin1}

The first step in \textbf{Algorithm $3.1$} is to identify when a rarefaction curve reaches coincidence locus. This is easy when analytical formulas for the eigenvalues $\lambda_i$ and $\lambda_{i+1}$ are available. Unfortunately, this situation does not correspond to the general setting, where we usually have approximations of eigenvalues computed automatically. Our strategy for these cases is to construct a continuous function that has different signs on each side of the coincidence locus. Then, this sign changes serves as the stopping criterion for the ODE solver.

In this section we propose an algorithm to detect these changes for a 2D hyperbolic system of equations, which covers most of our applications. We suppose that explicit formulas for $F$, $G$ and their Jacobian matrices are given. Since matrix $B$ might be singular, we consider the equivalent problem of eigenvalues for matrix $M = B^\dagger A$ (\amf{for $A$, $B$ defined in \eqref{ge1} and $B^\dagger$ defined in Section \ref{chain}}). \amf{Again, let $\lambda_1 < \lambda_2$ denote the eigenvalues of \eqref{ge1}, i.e. roots of the characteristic polynomial $p(\lambda)=det(A-\lambda B)$, which can be found explicitly as},
	\begin{equation}
	\lambda_{1,2}=(trM \pm \sqrt{D})/2, \quad \text{where} \quad D = (trM)^2 -4detM.
	\label{discr}
	\end{equation}

Note that the discriminant $D$ is zero in a coincidence locus, but it is always non negative since we are restricted to hyperbolic systems and cannot be easily used to define a stopping condition for the solver. However, various methods are available to find a set of zeros of the discriminant in state space, thus characterizing the coincidence loci. After that, we propose \textbf{Algorithm 3.2} to identify when a wave curve intersects a coincidence locus, in order to stop the regular curve integrator and initiate the continuation procedure with  \textbf{Algorithm 3.1}:	\\

%.
\gui{\noindent{\textbf{Algorithm 3.2:}}}
\begin{itemize}
	\item[1)] Construct a rarefaction curve of family $i$ solving ODE \eqref{RH4} starting at $U_L \in \Omega_1$, at each step of the solver;
	
	\item[2)] During the integration procedure, for a given tolerance $\epsilon$ calculate the point $U_r^{\epsilon}$ such that discriminant $D$ in \eqref{discr} satisfies $|D(U_r^{\epsilon})| \le \epsilon$;
	
	\item[3)] Using $U_r^{\epsilon}$ as starting point, use procedure in \cite{mailybaev2006computation} and calculate the coincidence point $U^+_{o}$ where
	$\lambda_1(U^+_{o})=\lambda_2(U^+_{o})$;
	
	\item[4)] Define the distance $d_r=d(U_r^{\epsilon},U^+_{o})$ between point $U_r^{\epsilon}$ and $U^+_{o}$. Define the function $f(U_i)=d_r-d(U_r^{\epsilon},U_i)$ where $U_i$ denotes the state along the rarefaction curve during the integration procedure;
	
	\item[5)] As stopping criterion use the fact that $f(U_i) >0$ until $U^+_{o}$ and $f(U_i) < 0$ before the point $U_i$ crossing
	the coincidence locus.
	
\end{itemize}
Step 3 of \textbf{Algorithm 3.2} requires  the method developed in \cite{mailybaev2006computation} in order to accurately determinate the intersection point of two eigenvalues from a known close point. This method combines the versal deformation theory \cite{mailybaev2001transformation} and the Schur canonical form \cite{golub1996matrix} to characterize the locus where eigenvalues coincide. This procedure was implemented in a routine that computes multiple eigenvalues and generalized eigenvectors for matrices dependent on parameters.

%These tools are used in the following Algorithm $3.2$ to calculate the point where a rarefaction curve meets a coincidence locus. At this point, the traditional continuation procedure for rarefaction curves should be stopped and an implementation of Algorithm $3.1$ should be used. 

Notice that the stopping criterion in Step 5 of \textbf{Algorithm 3.2} must introduce in the ODE solver as an event function which is calculate in each step of the integration procedure. This process is based on event localization during integration, which consists of characterizing the sought event (in our case the intersection with a coincidence locus) as a zero of a continuous function. After each step of the integration, it is checked if this function changes sign to invoke refinement of the solution and localization of the event.
Several paper addressed the problem of detection and location of events, see e.g. \cite{newman1968numerical,moler1997we,shampine1997matlab,shampine2000event} and cited there in. In our Riemann solver we use the ODE solver of MATLAB, providing the event associated to the function $f$ of step four of \textbf{Algorithm 3.2} that changes it sign when crossing the coincidence locus.

\subsection{Stopping at a planar boundary}
\label{plane}

Here we suggest an alternative procedure for Step 2 of \textbf{Algorithm 3.2} when the coincidence curve is known to be planar and which identifies intersection points with great precision. This is a very useful procedure, since it may be efficiently implemented to deal with various types of stopping conditions along curves, e.g. domain boundaries, coincidence and inflection loci. Moreover, this algorithm can be used in a large array of applications in other areas.

We are interested in identifying when an orbit crosses $n$-dimensional hyperplanes $P=\{x: a_1 x_1 + ... + a_n x_n=d\}$ in order to stop the continuation or change to the appropriate procedure. Let us define $z = \vec{a} \cdot x-d$, with $ \vec{a}=(a_1,\dots,a_n)$. Differentiating the variable $z$ with respect to $x_1$, we obtain
	
\begin{eqnarray}
	\frac{dz}{dx_1} = a_i + \sum_{j=1; j \neq i}^n a_j \frac{dx_j}{dx_1}.
	\label{dfg}
\end{eqnarray}
	
System \eqref{eq:rarefaction_curve}, which describes the rarefaction curves, can be explicitly written for any chosen family of index $j$ as
	\begin{equation}
	\label{eq:planar_boundary_1}
	\frac{dx_j}{d\xi} = r_j(x),
\end{equation}
\noindent for initial data $x_{j}(\xi=0) = x_0$. Supposing that $r_1 \neq 0$, we obtain $dx_j / dx_1 = r_i / r_1$ from \eqref{dfg} and \eqref{eq:planar_boundary_1}. This leads to equation
\begin{equation}
	\label{eq:planar_boundary_2}
	\frac{dz}{dx_1} = a_1 + \sum_{i=2}^n a_i \frac{r_i}{r_1},
\end{equation} 
with initial data $z(x_0)$, which is integrated together with to system \eqref{eq:planar_boundary_1}. We use an integrator for this system} with a fixed step size $h$. At each step $i$ we calculate the distance $d_i$ between the newly generated point $(x_i,z_i)$ and the plane $P$. Then, event detection algorithms can be used to stop the integration whenever $d_i < h$. The last step is done integrating with step size $d_i$ until $z-d$ reaches zero.
	
\subsection{Condition for stopping at inflection locus}
\label{subsect:stop_inflection}
We define the inflection locus for the family $k$ as the set of states where $\triangledown \lambda_{k} \cdot r_{k}=0$. This condition can be expressed in more detail:
 \begin{equation}
 \triangledown \lambda_{k} \cdot r_{k}
 =l_k^T \cdot \left(r_k^T\frac{\partial^2F}{\partial W^2}r_k-\lambda _k r_k^T\frac{\partial^2G}{\partial W^2}r_k\right)/(l_k^T \cdot Br_k)
 =0 ,
 \label{infleL}
 \end{equation}
 where $B$ is the Jacobian of the accumulation $G$ and $\frac{\partial^2F}{\partial W^2}$, $\frac{\partial^2G}{\partial W^2}$ denote the Hessian of accumulation and flux, respectively. We denote by $r_k$ and $l_k$ the right and left generalized eigenvector of the matrix in \eqref{ge1}, respectively (see deduction of \eqref{infleL} in \cite{helmut2011thermal}).

Formula \eqref{infleL} is valid if there are right and left eigenvectors $r_k$ and $l_k$ at each point such that $l_k^T \cdot Br_k=0$ . But this not true on the coincidence locus
where the matrix $M=B^\dagger A$ takes the form of a Jordan block. Therefore, we use formula 
 \eqref{infleL} as a stop criterion for an inflection locus that does not coincide with a coincidence locus. When this intersection happens, it suffices to employ \textbf{Algorithm $3.1$} and the criterion described in Section \ref{coin1}.

\subsection{Application to the ICDOW model}

In this section we exemplify the appearance of coincidence locus with the model studied 
in \cite{alvarez2017analytical}. The procedure developed in this paper is used to solve a particular Riemann problem.

%In this section, we  take as example the model studied in \cite{alvarez2017analytical} where the coincidence locus appears. Solving a particular Riemann problem is used the procedure derived in this paper.

We take the system of three conservation laws disregarding 
diffusive terms 
 \begin{align}
 \partial_{t}\left(  \varphi  s_{w} \grdel{\rho_{1}}  \right) 
 +\partial_{x}\left(
 uf_{w}\grdel{\rho_{1}} \right)  =  0,  %
 \label{eq:balance1a}\\
 \partial_{t}\left(  \varphi ( s_{w} \grdel{\rho_{2}}   + s_{o}\grdel{\rho_{3}}) \right) 
 +\partial_{x}\left(
 u(f_{w} \grdel{\rho_{2}} + f_{o}%
 \grdel{\rho_{3}})\right)  =  0,  %
 \label{eq:balance3a}\\
 \partial_{t}\left(  \varphi s_o \grdel{\rho_{4}} \right) 
 +\partial_{x}\left(
 u f_o \grdel{\rho_{4}}%
 \right)  =  0,  %
 \label{eq:balance4a}
 \end{align}
for the unknowns water saturation $s_w$, $y$ and the Darcy velocity $u$, i.e. $U=(s_w,y,u)$. We also have $s_o=1-s_w$ and $f_o=1-f_w$. The parameter $\varphi$ denotes the porosity of the media. The molar density functions \grdel{$\rho_{1}$, $\rho_{2}$, $\rho_{3}$ and $\rho_{4}$} are positive and differentiable functions that depend only on the variable $\grdel{y}$.
 
 The system of conservation laws (\ref{eq:balance1a})-(\ref{eq:balance4a}) can be rewritten as:
 \begin{align}
 &\frac{\partial G(s_w,y)}{\partial t}+\frac{\partial ( u \widehat{F}(s_w,y))}{\partial x}=0,\label{leicons2a1}
 \end{align}
 where the accumulation and flux functions are written as
 	\begin{align}
 	\label{fsombre1}
 	(G_1,G_2,G_3)^T&=\varphi( s_{w}\rho_{1}, s_{w}\rho_{2}  + s_{o}\rho_{3}, s_o\rho_{4})^T,
 	\\
 	(\widehat{F}_1,\widehat{F}_2,\widehat{F}_3)^T 
 	&= (f_{w}\rho_{1},f_{w}\rho_{2}+ f_{o}%
 	\rho_{3},f_o \rho_{4})^T.
 	\label{fsombre}
 	\end{align}
We are interested in the Riemann-Goursat problem  associated with $(\ref{leicons2a1})$,  
 i.e. the solution of $(\ref{leicons2a1})$ with piecewise constant initial and boundary data
 \begin{equation}
 \left\{
 \begin{array}
 [c]{ll}%
 (s_{wl},y_l,u_l) &
 \text{if} \quad x=0, t > 0, \\
 (s_{wr},y_r,u_r) & \text{if} \quad x > 0, t=0.
 \end{array}
 \right.\label{Riemancondition}
 \end{equation}
 The value of $u_r$ on the right state is obtained from the model. Notice that the accumulation function $G$ does not depend on the variable $u$. Thus, the Jacobian of $\partial G/\partial U$ has null third column. Moreover, this variable appears in the flux function multiplying a function that depends on $(s_w,y)$.
  This case perfectly falls in the class of problems studied in Section \ref{chain}, thus we can use
  the procedure developed in this work.
  
  After solving the generalized eigenproblem \eqref{ge1} for the ICDOW model (see \cite{alvarez2017analytical} for details) we obtain the eigenvalues 
   \begin{equation}
   \lambda_s=\dfrac{u}{\varphi}\frac{\partial f_w}{\partial s_w}, \quad \text{and} \quad 
   \lambda_H=\dfrac{u}{\varphi}\dfrac{\Delta_1 f_w+ \Delta_2}{\Delta_1 s_w+ \Delta_2},
   \label{lams}
   \end{equation}
  where $\Delta_i$ depend only on $y$, with $i=1,2$.
  
  \begin{figure}[h]
  \begin{center}
  		\includegraphics[scale=0.43]{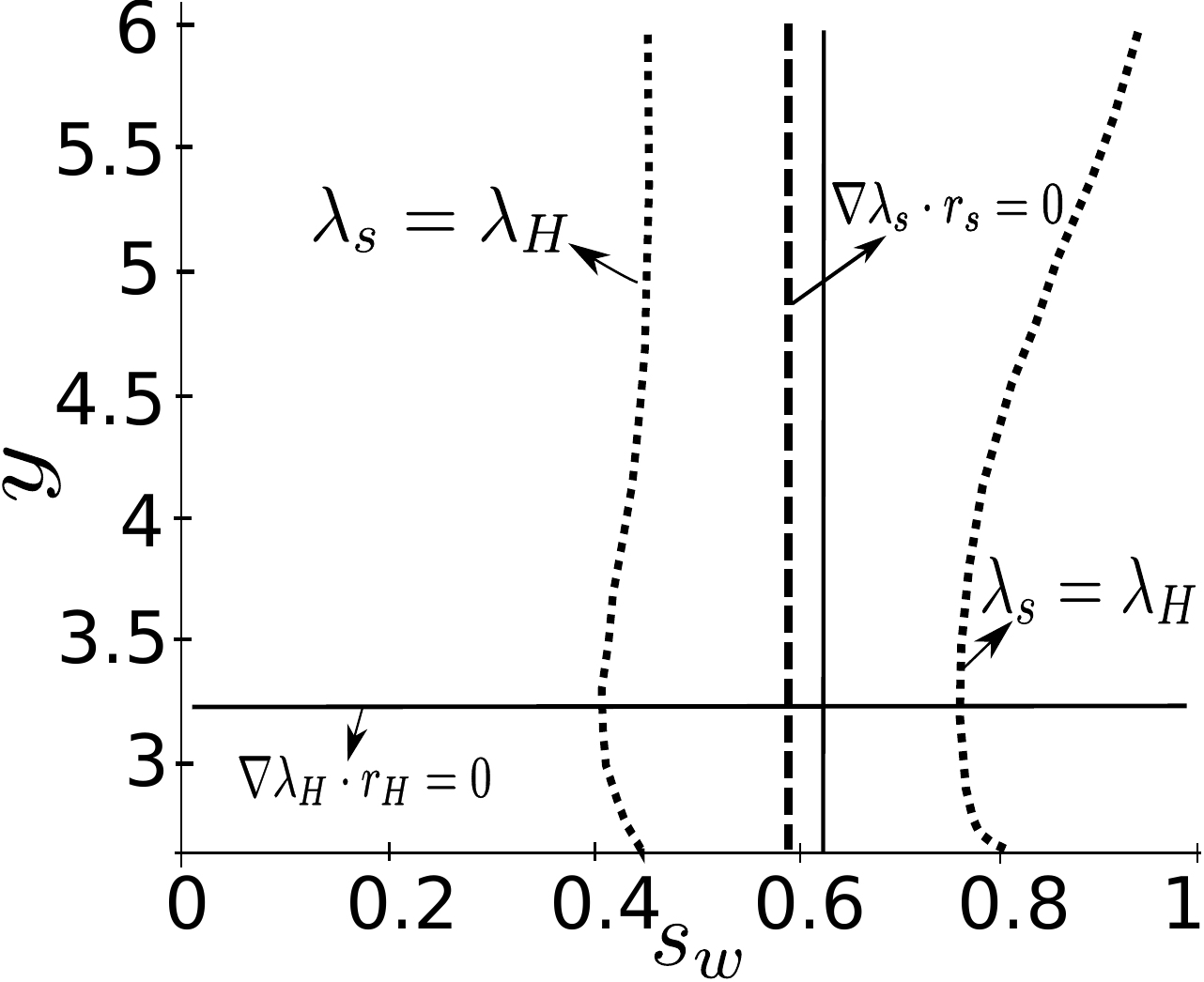}
  \end{center}
  	\caption{Coincidence and inflection loci in the projected phase state $(s_w,y)$. 
  	 Dashed line represents 
  			the inflection locus for the saturation wave $\triangledown \lambda_{s} \cdot r_{s}=0$, dot lines represent the coincidence $\lambda_H=\lambda_{s}$ and the bold line represents the inflection locus for the case of composition wave curves $\triangledown \lambda_H \cdot r_H=0$. We have that $\{\lambda_H=\lambda_{s}\} \subset \{ \triangledown \lambda_H \cdot r_H=0 \}$. }
  		\label{phasespacea1}
  \end{figure}	
  The eigenvector of characteristic system  $\partial F/\partial U-\lambda \partial G/\partial U=0$  for $\lambda_s$ is $\vec{r}_{s}=(1,0,0)^T$ (saturation wave)
   while the  eigenvector for $\lambda_H$ is $\vec{r}_H=(r_H^1,r_H^2,r_H^3)$, which is called the chemical composition wave because mainly the chemical variable $y$ changes.
   In this case, we have two families of rarefaction curves, the saturation rarefaction, denoted by $\mathcal{R}_{s}$ and
  	the chemical rarefaction, denoted by $\mathcal{R}_H$. These curves are obtained as integral curves of each eigenpair, i.e.    
  	$d \mathcal{R}_{s}/d \xi=\vec{r}_{s}$ 
  	and $d \mathcal{R}_H/d \xi=\vec{r}_H$. The other curves necessary for constructing 
  	the Riemann solution are the shock curves which represent the discontinuous solution of 
  	 \eqref{eq:balance1a}-\eqref{eq:balance4a}. In  phase space $(s_w,y,u)$ these discontinuous solutions form the Rankine-Hugoniot locus (RH-locus).  For a  given left state $U^-=(s_w^-, y^-, u^-)$,   the  RH-locus is the set of right states $U^+=(s_w^+, y^+, u^+)$ that satisfy the  Rankine-Hugoniot relationships
  	\begin{equation}
  	\sigma (G_i(s_w^+, y^+)-G_i(s_w^-, y^-))=u^+\widehat{F_{i}}(s_w^+, y^+)-u^-\widehat{F_{i}}(s_w^-, y^-),
  	\label{rh1a}
  	\end{equation}
  	for $i=1,2,3$. Here $\widehat{F_{1}}$, 
  	$\widehat{F_{2}}$ and $\widehat{F_{3}}$ are given by (\ref{fsombre}) while $G_1$, 
  	$G_2$ and $G_3$ are given by (\ref{fsombre1}). The function $\sigma=\sigma(U^-,U^+)$ represents the shock speed between the states $U^-$ and $U^+$. We denote by $S_s$ the shock associated with the saturation wave and $S_H$ associated with the chemical wave.
  	
  In Figure \ref{phasespacea1} we present the bifurcation curves, i.e. inflection and coincidence loci, for ICDOW model with coefficient function $\rho_i$ appearing in
  \cite{alvarez2017analytical}. We use the fractional function $f_w$ show in Figure \ref{fractional} left. A particularity of the bifurcation curves in this case is that 
  the coincidence locus belongs to the inflection locus of the chemical family, i.e. 
  $\{U:\lambda_H(U)=\lambda_{s}(U)\} \subset \{U:\triangledown \lambda_H \cdot r_H (U)=0\}$. 
 
\begin{figure}[h]
	\begin{center}
		\includegraphics[scale=0.39]{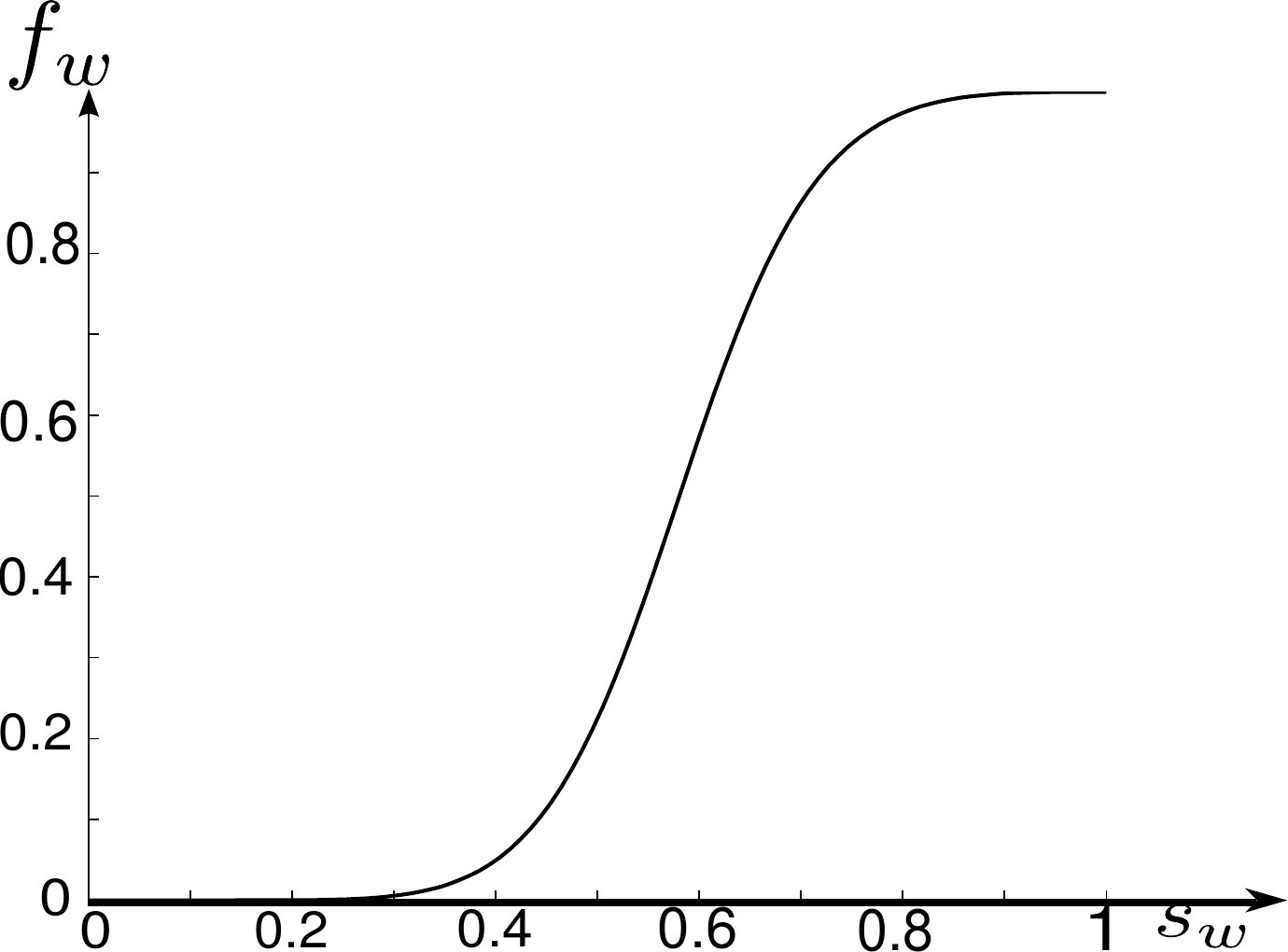}
		\includegraphics[scale=0.42]{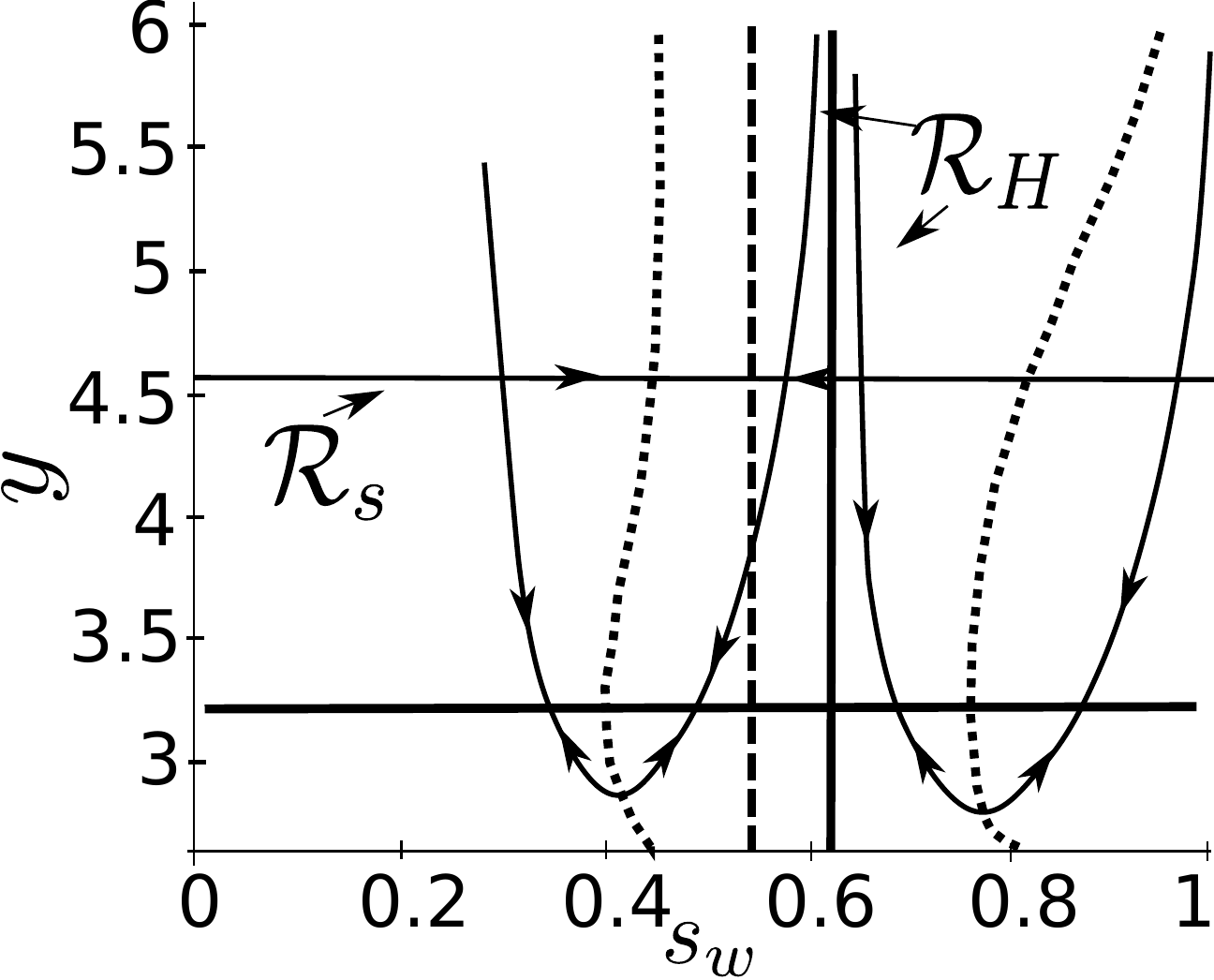}
	\end{center}
	\caption{\textit{a - left)} Fractional flow $f_w$ given by Eqs.   \eqref{eq:balance4a}-\eqref{eq:balance4a}.
		\textit{b - left)} Rarefaction $\mathcal{R}_s$ and $\mathcal{R}_H$ the arrows indicates the direction of increasing of $\lambda_s$ and $\lambda_H$. Shock $\mathcal{S}_s$ is also a straight line parallel to axis $y$. Shock $\mathcal{S}_H$ has a similar form that $\mathcal{R}_H$ for states close to coincidence of the eigenvalues curve $\mathcal{C}$. }
	\label{fractional}
\end{figure}
 \begin{figure}[h]
 	\begin{center}
 		\includegraphics[scale=0.42]{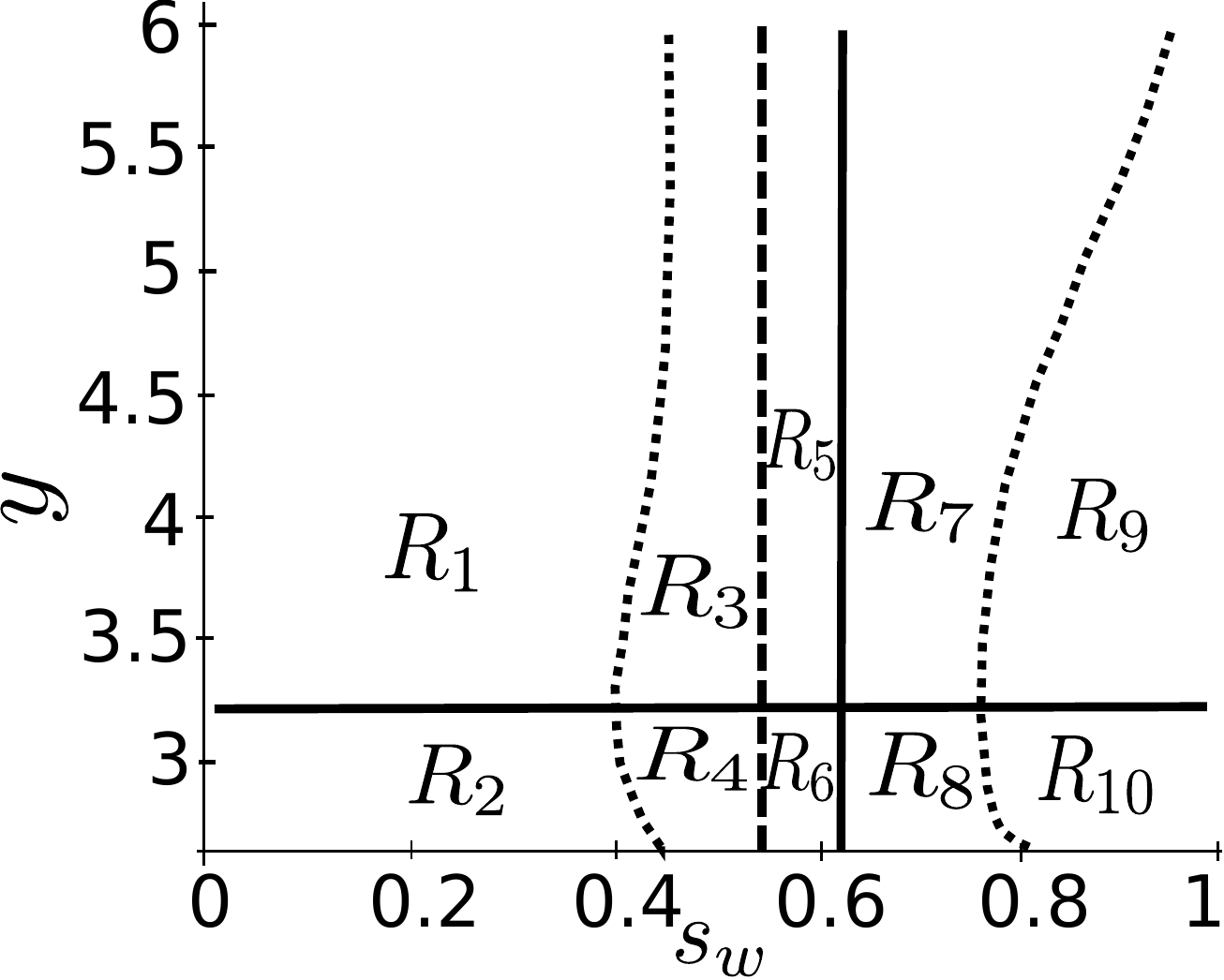}
 		\includegraphics[scale=0.42]{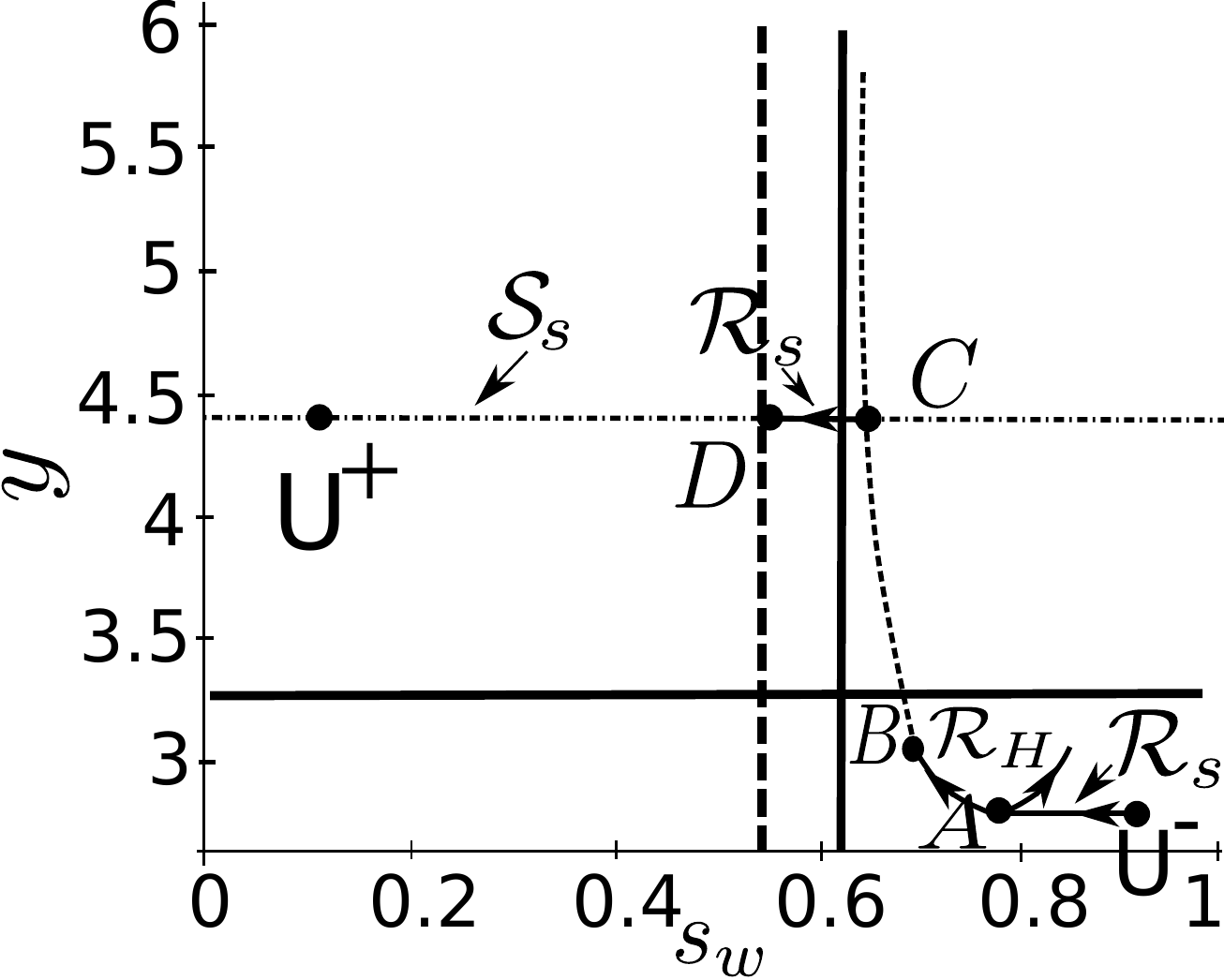}
 	\end{center}
 	\caption{
 		\textit{ a)- left}.\grdel{The ten regions $R_1$ to $R_{10}$ in the phase space.\textit{ b)- right} Wave sequence solution of the Riemann problem.  The dashed curves represent shocks and the black curves with arrows represent rarefactions. In this picture the coincidence and inflection curves are the same show in Figure \ref{phasespacea1}.
 		}
 	}\label{phasespacea1A}
 \end{figure}
 
 A Riemann solver for the system \eqref{leicons2a1} was developed to represent the rarefaction and shock curves as show in Figure \ref{fractional}. This program implements the  theoretical structures in Riemann solutions predicted by an advanced theory of the wave curve methods. With this novelty, we implement the procedure explained in this paper to traverse the coincidence locus with the rarefaction and shock curves.
 
 The self-similar elementary curves suffer modification at a state on the bifurcation loci shown in Figure $\ref{phasespacea1}$ (see explanations of the wave curve method in \cite{lambert2006riemann}). To construct the Riemann solution, we first obtain the 
 wave sequence in $(s_w,y)$ using combinations of rarefactions $\mathcal{R}_{s}$, $\mathcal{R}_H$ as well as shock curves $\mathcal{S}_{s}$, $\mathcal{S}_H$, taking into account the coincidence locus $\mathcal{C}$ together with the inflections loci $I_{s}$ and  $I_H$. The bifurcation curves split the projected space $(s_w,y)$ in ten regions, see Figure \ref{phasespacea1A} left. In particular, we see that  $\lambda_H < \lambda_s$ in subregions  $R_3$ to $R_8$   and  $\lambda_H > \lambda_s$ for states in $R_1$, $R_2$, $R_9$ and $R_{10}$.

As an example, we show the Riemann solution for the case that the left state $U^-$ is in the region $R_{10}$ and the right sate $U^+$ is in the region $R_1$. The solution is a wave sequence in the phase space  given by $ U^-\xrightarrow{\mathcal{R}_s}A\xrightarrow{\mathcal{R}_H}B\xrightarrow{\mathcal{S}_H}C
\xrightarrow{\mathcal{R}_s}D\xrightarrow{\mathcal{S}_s}U^+$; see Figure \ref{phasespacea1A} right. The state $A$ is on the coincidence curve $\mathcal{C}$, i.e. $\lambda_s(A)=\lambda_H(A)$. Moreover, $\sigma(B,C)=\lambda_H(A)$ and $\sigma(D,U^+)=\lambda_s(D)$. In the wave sequence, we drop the initial state for rarefaction curves and the initial and final states in shock curves. For the construction of the wave sequence, we use a method similar to those described in \cite{liu1974riemann,liu1975riemann}.

At the point $A$, there exists a unique eigenvector $\vec{r}_{s}=(1,0,0)$. To continue the rarefaction curve $\mathcal{R}_s$, we applied the procedure explained in Section \ref{proceM}, using the regularized manifold to build an admissible and robust direction beyond the coincidence point.  We first obtained the generalized
 Jordan chain $r_o$ and $r_1$ in \eqref{gen3} and latter used versal deformation vectors $R_o$ and $R_1$ in \eqref{verssal} to obtain the admissible direction at the coincidence characteristic speed point $A$.

 \section{Violation of genuine nonlinearity }
\label{comp}

In this section, we present an algorithm for the construction of a composite wave curve, required when a rarefaction curve stops at an inflection point, i.e. where $\triangledown \lambda \cdot r =0$. Our construction is more general, since we take into account that this inflection may happen in a coincidence point.

A discontinuity in the solution of the system of conservation laws in \eqref{eqt}
satisfies the Rankine-Huguniot locus, i.e.
\begin{equation}
F(U^+)-F(U^-)=s(G(U^+)-G(U^-)),
\label{RH}
\end{equation}
where $s$ denotes the shock speed. We assume that the generalized eigenproblem in
\eqref{ge1} has $n$ eigenvalues such that
$\lambda_1 \le \dots  \le \lambda_n.$

The resonance happens when there are two eigenvalues $\lambda_i$ and $\lambda_j$ with $i\neq j$ such that $\lambda_i(U_o)=\lambda_j(U_o)$,  where $j=i+1$ or $j=i-1$. Moreover, there exist the possibility that on the point $U_o$ the i-th family has an inflection, i.e. $\triangledown \lambda_i \cdot r_i(U_o) =0$. In this case using the Bethe-Wendrof Theorem (see \cite{wendroff1972riemann2}), it is possible to construct a combination of
a rarefaction $\mathcal{R}_i$ with a characteristic shock $\mathcal{S}_i$, satisfying the compatibility condition $\lambda_i(U^-)=s(U^-,U+)$, where $s(U^-,U+)$ denotes
the shock speed  from the state $U^-$ to $U^+$ to cross the inflection locus $\Gamma$.

However, due to the coincidence of characteristic velocities at point $U_o$, there are singularities that don't  allow the construction of a composite curve in the classical sense, as explained in \cite{liu1975riemann}. In this section, we study this phenomenon in anomalous cases appearing for the non-strictly hyperbolic setting. We provide an analysis justifying the construction of composite curve and a numerical recommendation for its implementation in a Riemann solver. 

We are interested in finding the composite curve $(U^-(\xi),U^+(\xi))$. By definition, this curve satisfies the equation \eqref{RH}, $s(U^-(\xi),U^+(\xi))=\lambda(U^-(\xi))$ and $U^-(\xi)$ obeys the ODE 

\begin{equation}
dU^-/d \xi=r(U^-(\xi)), \quad \text{with} \quad U^-(0)=U_l,
\label{edo1}
\end{equation}
where $r$ is the eigenvector associated with $\lambda$, i.e.
$DF(U^+)r=\lambda DG(U^+)r$, where $DF=\partial F/ \partial U$ and $DG=\partial G/ \partial U$. We assume that the vector $r$ has norm one and adequate sign.

Let us denote by 
\begin{equation}
E(U^-,U^+)=F(U^+)-F(U^-)-\lambda(U^-)(G(U^+)-G(U^-)).
\label{RH1}
\end{equation}
If the Jacobian $D_{U^+}E=DF(U^+)-\lambda(U^-)DG(U^+)$ is invertible in the neighborhood of a point
$(U_o^+,U^-_o)$, then by the Implicit Function Theorem there are $\rho$ and $\epsilon$ 
such that in the set $\{(U_o^+,U^-_o):||U^+-U_o^+|| < \rho ~~\text{and} ~~||U^--U_o^-|| < \epsilon\} $ the equation 
\begin{equation}
E(U^-,U^+)=0,
\label{RH1as1}
\end{equation}
has a unique solution $\widehat{U}^+=\phi(U^-)$ (or $\widehat{U}^+=U^+(U^-)$) such that $E(U^-,\phi(U^-))=0$ and
\begin{equation}
D\phi(U^-)=-(DE_{U^+}(U^-,\widehat{U}^+))^{-1} DE_{U^-}(U^-,\widehat{U}^+),
\end{equation}
or
\begin{align}
\nonumber
D\phi(U^-)=&(DF(\widehat{U}^+)-\lambda(U^-)DG(\widehat{U}^+))^{-1}  \\&\times(DF(U^-)-\lambda(U^-)DG(U^-)+(G(\widehat{U}^+)-G(U^-))\triangledown \lambda(U^-)).
\label{main1a}
\end{align}
%with $D\lambda(U^-)=(\triangledown \lambda,\triangledown \lambda)^T$.

Thus locally, for each rarefaction curve $\xi \rightarrow U^-(\xi)$ (solution of the initial value problem \eqref{edo1}), there exists a unique curve 
\begin{equation}
\xi \rightarrow U^+(\xi)=\phi(U^-(\xi)),
\label{main1}
\end{equation}
satisfying equations \eqref{RH1}. We call this curve  a \textit{composite curve}.
When the point $(U_o^+,U^-_o)$ belongs to the inflection locus, we call it \textit{the local composite curve}\gui{, otherwise we refer to it as} \textit{the non-local composite curve}.

The above definition for a composite curve requires that the determinant of $D_{U^+}E$ be different from zero. However, we present here \gui{other} possibilities to construct \gui{a} composite curve satisfying \eqref{RH1as1} with $U^-$ given as solution of \eqref{edo1}.

Let us take $K^+=K(U^+(\xi))$ and $K^-=K(U^-(\xi))$, where $K$ represents an arbitrary function. To find the composite curve, we have at least two methods: the first one consist of determining the solution of the equation \eqref{RH1} for a given $U^-(\xi)$ and choosing $\xi \in (0,\bar{\xi})$; the second is the continuation method, i.e. finding and solving an ODE associated to \eqref{RH1as1} starting at $U^-$.

\subsection{Derivation of ODE for the composite wave}

Modeling for rarefaction and shock waves can be combined to form an unique ODE that takes into account Rankine-Hugoniot and the compatibility conditions. We assume a parametrization along the wave curve $(U^-(\xi),U^{+}(\xi),s(\xi))$. Differentiating \eqref{RH} along this curve we obtain
\begin{align}
\nonumber
-(G(U^+)-G(U^-))\frac{ds}{d \xi}+\left(DF(U^+)-sDG(U^+)\right)\frac{d U^+}{d\xi}-\\
\left(DF(U^-)-sDG(U^-)\right)\frac{d U^-}{d\xi}=0.
\label{RH2}
\end{align}
We assume that the condition
\begin{equation}
\lambda_i(U^-(\xi))=s(U^-(\xi),U^+(\xi)),
\label{rh234a}
\end{equation}
holds  starting from some $\xi=\xi_l$ along the curve satisfying \eqref{RH2}. Substituting equality \eqref{rh234a} in \eqref{RH2} we have 
\begin{equation}
\left(DF(U^-)-sDG(U^-)\right)\frac{d U^-}{d\xi}=0,
\end{equation}
and
\begin{equation}
ds/d \xi= d \lambda_i/ d \xi= \triangledown \lambda_i(U^-) \cdot \tilde{r}_i(U^-),
\label{rh234b}
\end{equation}
where $\tilde{r}_i(U^-)$ is the normalized eigenvector associated with $\lambda_i(U^-)$, i.e.
$\tilde{r}_i=r_i/(l_i \cdot r_i)$, where $l_i$ is the left eigenvector associated to $\lambda_i^-$.

Using \eqref{rh234a} and \eqref{rh234b} in \eqref{RH2} we obtain ODE
\begin{equation}
\left(DF(U^+)-\lambda_i(U^-)DG(U^+)\right)\frac{d U^+}{d \xi}=  (\triangledown \lambda_i(U^-) \cdot \tilde{r}_i(U^-)) (G(U^+)-G(U^-)),
\label{RH3}
\end{equation}
with $U^+(0)=U^-$ and
\begin{equation}
\frac{d U^-}{d \xi}=\tilde{r}_i(U^-),
\label{RH4}
\end{equation}
where $U^-(\xi_l)=U_l$.
Integrating \eqref{RH4} we have
\begin{equation}
U^-(\xi)=\int_{\xi_l}^{\xi}r_i(U^-(\eta))d\eta+U_l.
\label{umas}
\end{equation}
Let us define 
\begin{equation}
A(U^+,U^-)=\left(DF(U^+)- \lambda_i^-DG(U^+)\right),~  V=U^+-U^-,~ \lambda_i^-=\lambda_i(U^-),
\label{eqr1}
\end{equation}
and study separately the cases where determinant of $A(U^+,U^-)$ is zero or not.

In the following, we study conditions under which the composite field on ODEs \eqref{RH3} and \eqref{RH4} is well defined and its singularities can be removed, allowing the construction of composite wave curves by means of our continuation method. This removal is based on the fact that, if $r_i$ is an eigenvector, so is $c r_i$ for non zero scalar values $c$; then, we only need to find appropriate scalar functions that remove the singularities of \eqref{RH3} upon multiplication. 
However, it is not always convenient to find the composite wave curve through ODEs \eqref{RH3} and \eqref{RH4}. Instead, we propose an alternative formulation: using a parametrization for the composite curve, we develop a numerical algorithm allowing its calculations.

%\gr{We emphasized in the equation \eqref{RH3} since the singularities in \eqref{RH4} can be removed from the fact that if ${r}_i$ is eigenvector then $c{r}_i$ as well, where $c$ is certain known expression.}

\subsubsection{The generic violation case}

To calculate the eigenvalues and eigenvector of the generalized eigenvalues problem
in \eqref{ge1}, we use the fact for the Generalized Schur decomposition of the Jacobian $DF$ and $DG$ of flux and accumulation functions there exist unitary matrix $Q$ and $Z$ such that the matrix $Q(DF)Z$ and $Q(DG)Z$ are upper triangular $T$ and $S$ with element in the diagonal $t_{ii}$ and $s_{ii}$. Then the eigenvalues are given by $\lambda_i=t_{ii}/s_{ii}$ with $s_{ii}, \neq 0$, see \cite{datta2010numerical}.  

Here, we study the qualitative behavior the solution of ODE system \eqref{RH3}-\eqref{RH4} when the determinant $det(A(U^+,U^-)) \neq 0$ for all point along the curve satisfying \eqref{RH2}-\eqref{rh234b}, which is called the composite wave curve. 

Since determinant $det(A(U^+,U^-))$ can be written as 
\begin{equation}
det(A(U^+,U^-))=det(S)(\lambda^+_1-\lambda_i^-)  \cdots  (\lambda^+_n-\lambda_i^-),
\label{det1}
\end{equation}
the condition $det(A(U^+,U^-)) \neq 0$ implies that there is no $\xi$ along the composite wave curve  such that $\lambda_j(U^+(\xi))=\lambda_i(U^-(\xi))$ for $i=1,\cdots,n$ and $j=i-1$ or $j=i+1$.

From \eqref{RH3} we obtain
\begin{equation}
\frac{d U^+}{d \xi}=  \frac{\triangledown \lambda_i^- \cdot \tilde{r}_i(U^-)}{det(A(U^+,U^-))}Adj(A(U^+,U^-))(G(U^+)-G(U^-)).
\label{RH33}
\end{equation}
where $Adj$ denotes the adjugate matrix, see \cite{gantmacher1960theory}.

From expressions \eqref{det1} for the determinant and \eqref{RH33} for the composite curve, we see that resonance phenomena can lead to composite fields that are not well-defined. We address this situation by analyzing the equivalent problem of a collision between shock and rarefaction wave curves.

Taking the difference between \eqref{RH33} and \eqref{RH4}, and substituting $U^+=V+U^-$, we obtain
\begin{equation}
\frac{d V}{d \xi}=  \frac{\triangledown \lambda_i^- \cdot \tilde{r}_i(U^-(\xi))}{det(A(V+U^-,U^-))}Adj(A(V+U^-,U^-))V-\tilde{r}_i(U^-(\xi)),
\label{RH33a}
\end{equation}
where $U^-(\xi)$ is given by \eqref{umas}. Thus we have an ODE for $V$ starting at $\xi=\xi^-$, where $V=0$.
Equation \eqref{RH33a} can be rewritten as
\begin{align}
\nonumber
\frac{d V}{d \xi}=&  \frac{\triangledown \lambda_i^- \cdot \tilde{r}_i(U^-(\xi))}{det(A(V+U^-,U^-))}Adj(A(V+U^-,U^-))  \\
 &\times\left(V-\frac{1}{\triangledown \lambda_i^- \cdot \tilde{r}_i(U^-(\xi))}A(V+U^-,U^-)\tilde{r}_i(U^-(\xi))\right),
\label{RH33a1}
\end{align}

Then, we use ODE \eqref{RH33a1} to do the analysis of the solution of the ODE system \eqref{RH3}-\eqref{RH4}. By inspection of right side of \eqref{RH33a1}, it is possible to verify that singular points are the points $(U^-(\xi),U^+(\xi))$ where $r_i(U^-(\xi)) \in Ker(A(U^+(\xi),U^-(\xi)))$ and $\triangledown \lambda^- \cdot r_i(U^-(\xi))=0$.

Also, it is possible to verify that if $\triangledown \lambda^- \cdot r_i(U^-(\xi))=0$ then
$det(A(U^+,U^-))=0$ \gui{and vice versa}. Therefore, there are no singular points under the assumption that $det(A(U^+,U^-)) \neq 0$. \gui{Under this assumption} it is possible to construct the composite curve using  \eqref{RH3} and \eqref{RH4}\gui{, but it is necessary define} the field at $\xi=0$ or $U^+=U^-$ appropriately.	Notice that the curve $U^-(\eta)$ and $U^+(\eta)=U^-(\eta)$ is solution of \eqref{RH1as1}, which we call a \textit{trivial composite curve}. Thus, we can define $dU^+/d\xi=-r_i(U^-)$ at $U^+(0)=U^-$ when $ \triangledown (\triangledown \lambda^- \cdot r_i) \cdot r_i(U^-(\xi)) \neq 0$ (\gui{see \cite{ancona2001note}} for the proof).
	In this way the solution of ODE \eqref{RH33a1} do not reproduce the trivial case.

\subsubsection{The singular violation case}
\label{ape1}

In dedicate this subsection to the study of some singular cases appearing in the construction of composite wave curves. We analyze under what conditions the composite field in \eqref{RH33} is well defined and how appearing singularities may be removed. The idea behind this study is to simultaneously prove the existence of composite curves and to indicate how they are constructed through a continuation method with an appropriate parametrization.

We study the case of \gui{isolated} singular points where $det(A(U^+,U^-))=0$ \gui{,or equivalently when} the Jacobian $D_{U^+}E$ is not singular. In this case \gui{it} is not possible to obtain the diffeomorfism between variables $U^+$ in function of $U^-$ by applying the Implicit Function Theorem. This \gui{also} means that \gui{it} is not possible to use \gui{equations} \eqref{RH3} and \eqref{RH4} to obtain the \gui{other} branch of \gui{the} composite curve. However, in this case we find a local parametrization \gui{for the} composite curve close \gui{to these} singular points in another system of coordinates.

First, we assume that there is only one value for
index $j$ such that $\lambda_{j}((U^+)^*)-\lambda_i((U^-)^*)=0$ at distinct points $(U^+)^*$ and $(U^-)^*$. Let $l_{j}^+$ be the left eigenvector associated to an eigenvalue $\lambda_{j}$ of the generalized eigenproblem in \eqref{ge1}.

Multiplying \eqref{RH3} by the left eigenvector $l_{j}^+=l_{j}(U^+)$ of the generalized eigenproblem \eqref{ge1}, we obtain

\begin{equation}
l_{j}^+ \cdot \left(DF(U^+)-\lambda_i(U^-)DG(U^+)\right)\frac{d U^+}{d \xi}=  (\triangledown \lambda_i(U^-) \cdot \tilde{r}_i (U^-)) ( l_{j}^+ \cdot (G(U^+)-G(U^-)).
\label{RH3a1}
\end{equation}
Using that $l_{j}^+ \cdot DF(U^+)=\lambda_{j}^+ l_{j}^+DG(U^+)$, we obtain from \eqref{RH3a1}

\begin{equation}
\left(\lambda_{j}^+-\lambda_i^-\right) \left(l_{j}^+ DG(U^+)\cdot \frac{d U^+}{d \xi}\right)=  (\triangledown \lambda_i(U^-) \cdot \tilde{r}_i (U^-))( l_{j}^+ \cdot (G(U^+)-G(U^-))).
\label{RH3a2}
\end{equation}

In this situation, we have to analyze the following possibilities:
\begin{itemize}
	
	\item[a)] $\triangledown \lambda_i^- \cdot r_i((U^-)^*) \neq 0$,

	\item[b)]  $\left[\triangledown \lambda_i((U^-)^*) \cdot \tilde{r}_i ((U^-)^*)\right] \left[l_{j}^+ \cdot (G((U^+)^*)-G((U^-)^*))\right] \neq 0$,
	
	\item[c)]  $\left[\triangledown \lambda_i((U^-)^*) \cdot \tilde{r}_i ((U^-)^*)\right] \left[l_{j}^+ \cdot (G((U^+)^*)-G((U^-)^*))\right] = 0$,
	
	\item[d)] $\triangledown \lambda_i \cdot \tilde{r}_i   \equiv 0$  on a submanifold $\Sigma$ of codimension $1$,
	
	\item[e)]there exists a points $(U^-)^*$ such that $\lambda_{i+1}((U^-)^*)-\lambda_i((U^-)^*)=0$ with $\triangledown \lambda_i \cdot r_i((U^-)^*)=0$.

\end{itemize}

\textbf{Case a)} \gui{We consider the case} when  $\triangledown \lambda_i^- \cdot r_i((U^-)^*)$  is not zero and there exist \gui{distinct points $(U^+)^*$, $(U^-)^*$ and only one value for $j$} such that $\lambda_{j}((U^+)^*)-\lambda_i((U^-)^*)=0$. Using \eqref{RH3a2}, we conclude that if $\lambda_{j}((U^+)^*)-\lambda_i((U^-)^*)=0$ at some point $U^*=((U^-)^*,(U^+)^*)$ and  $\triangledown \lambda_i \cdot r_i((U^-)^*) \neq 0$, then  $l_{j}^+(U^*) \cdot (G((U^+)^*)-G((U^-)^*))=0$, i.e. the vector $G((U^+)^*)-G((U^-)^*)$ is orthogonal to the left eigenvector $l_{j}((U^+)^*)$. This is a necessary condition for a well defined composite field at the points $(U^+)^*$, $(U^-)^*$, since in the neighborhood of these points the eigenvector $\tilde{r}_i$ can be scaled by $\bar{r}_i=(\lambda_{j}^+-\lambda_i^-)\tilde{r}_i$ so that it is well behave at point $U^*$.

On the other hand, if  $l_{j}^+ \cdot (G((U^+)^*)-G((U^-)^*))=0$ and $l_{j}^+ DG(U^+)\cdot \frac{d U^+}{d \xi} \neq 0$ then $\lambda_{j}((U^+)^*)-\lambda_i((U^-)^*)=0$ therefore $det(A((U^+)^*,(U^-)^*)=0$. \\

%Also it possible to verify that if $d U^+/d \xi \in N(A(U^-,U^+))$ then there exist $j$ such that $\lambda_j(U^+)=\lambda_i(U^-)$ therefore $det(A)=0$. If $d U^+/d \xi \in N(A(U^-,U^+))^\perp$ then $det(A)\neq0$.

\textbf{Case b)} There exists a bifurcation point $((U^+)^*,(U^-)^*)$ when $(\triangledown \lambda_i((U^-)^*) \cdot \tilde{r}_i ((U^-)^*))\times( l_j^+ \cdot (G((U^+)^*)-G((U^-)^*)) \neq 0$ and $\lambda_{j}((U^+)^*)-\lambda_i((U^-)^*)=0$. In this case, it is not always possible to solve system \eqref{RH3}-\eqref{RH4} starting at this point and we need to regularize the characteristic field to construct the composite curve.

In this situation, it is possible to obtain a parametrization for the composite wave curve where the parameter is one of the components of vector $U^+$. We detail  this result in the following Lemma. We denote $U^+=(U_1^+,\ldots,U_j^+,\ldots,U_n^+)$ by $U^+=(W,U_j^+)$ where
$W=(U_1^+,\ldots,U_{j-1}^+,U_{j+1}^+,\ldots,U_n^+)$.
	
	\begin{lemma}
		\gui{For $n \ge 2$,} let $(U^+)^*$ be a point on the Hugoniot locus constructed from $(U^-)^* \neq (U^+)^*$. Assume that there exists only one index value $j$ such that
		\begin{itemize}
			\item[i)] $\lambda_i((U^-)^*)=s((U^-)^*,(U^+)^*)=\lambda_j((U^+)^*)$, 
			\item[ii)] $\triangledown \lambda_i \cdot \tilde{r}_i ((U^-)^*) \neq 0$,
			\item[iii)] $l_{j}^+ \cdot (G((U^+)^*)-G((U^-)^*)) \neq 0   $.
		\end{itemize}
        	Let $U^-(\alpha)$ be the solution of \eqref{RH4} with $U^-(0)=(U^-)^*$. Here $U_j^+$ is the j-$th$ component of the vector $U^+ \in \Re^n$. Then, there exists a parametrization 
		$U_j^+ \rightarrow \alpha(U_j^+)$ and $U_j^+ \rightarrow U^-(\alpha(U_j^+))$ satisfying
		\begin{equation}
		\dfrac{dU^-}{d U_j^+} \dfrac{dU_j^+}{d \alpha}= r_i(U^-(\alpha(U_j^+))) \text{, with } U^-(0)=(U^-)^*.
		\end{equation}
		 Let us denote by $W$ the variable consisting of the vector $U^+$ without its j-$th$ component. Then additionally, there is a parametrization $U_j^+ \rightarrow W(U_j^+)$ in a neighborhood of
		the point $((U^-)^*,(U^+)^*)$ satisfying 
			\begin{equation}
			E(U^-(\alpha(U_j^+)),W(U_j^+),U_j^+)=0, 
			\label{RH12a1}
			\end{equation}
			where $E(U^-,U^+)$ is defined by \eqref{RH1} and satisfies \eqref{rh234a}, i.e.
			\begin{equation}
			\lambda_i(U^-(\alpha(U_j^+)))=s(U^-(\alpha(U_j^+),U^+(U_j^+)).
			\label{rh234a121}
			\end{equation} 
		\label{lemv1}	
	\end{lemma}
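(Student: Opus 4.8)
The plan is to reduce the statement to two successive applications of the Implicit Function Theorem in a cleverly chosen coordinate system, trading the bad variable $U_j^+$ for the parameter $\alpha$ along the rarefaction. The obstruction to the naive approach is precisely that $D_{U^+}E = DF(U^+)-\lambda_i(U^-)DG(U^+)$ is singular at $((U^-)^*,(U^+)^*)$, because $\lambda_j((U^+)^*)=\lambda_i((U^-)^*)$ makes one factor in \eqref{det1} vanish; hence we cannot solve $E=0$ globally for the full vector $U^+$ in terms of $U^-$. But the kernel of $D_{U^+}E$ at the singular point is one-dimensional (spanned by the right eigenvector $r_j((U^+)^*)$, since by hypothesis only one index $j$ produces the coincidence), so we expect the restricted map, freezing $U_j^+$ as an independent variable, to be invertible. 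Concretely, first I would split $U^+=(W,U_j^+)$ and consider the map $E(U^-,W,U_j^+)$ as a function of the $n$ variables $(W,U_j^+)$ for fixed $U^-$; I would show that the $n\times n$ Jacobian $\partial E/\partial(W,U_j^+)$ is singular exactly because the $U_j^+$-column lies in the span of the $W$-columns, equivalently that $\partial E/\partial W$ (an $n\times(n-1)$ block) has full rank $n-1$ at the singular point, using condition (iii) that $l_j^+\cdot(G((U^+)^*)-G((U^-)^*))\neq 0$ to certify that the left eigenvector $l_j^+$ is not annihilated, so that the missing direction is genuinely spanned by $\partial E/\partial U_j^+$.

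Next I would set up the augmented system. Treat $(\alpha, W)$ as the $n$ unknowns and $U_j^+$ as the parameter, and consider the $n$ equations consisting of the $n$ components of $E(U^-(\alpha),W,U_j^+)=0$, where $U^-(\alpha)$ is the rarefaction solving \eqref{RH4} with $U^-(0)=(U^-)^*$. At the base point $\alpha=0$, $W=W^*$, $U_j^+=(U_j^+)^*$ we have $E=0$. The key computation is the $n\times n$ Jacobian with respect to $(\alpha,W)$: its columns are $\partial E/\partial U^- \cdot \tilde r_i(U^-)$ (the $\alpha$-column, via the chain rule and \eqref{RH4}) together with the $n-1$ columns $\partial E/\partial W$. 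I would show this matrix is invertible by a determinant/rank argument: the $\partial E/\partial W$ block has rank $n-1$ from the previous paragraph, and the $\alpha$-column is not in its span because — using $DE_{U^-} = DF(U^-)-\lambda_i(U^-)DG(U^-) + (G(U^+)-G(U^-))\nabla\lambda_i(U^-)$ from \eqref{main1a} and the rarefaction equation $(DF(U^-)-\lambda_i(U^-)DG(U^-))\tilde r_i(U^-)=0$ — the $\alpha$-column equals $(\triangledown\lambda_i\cdot\tilde r_i)(G(U^+)-G(U^-))$, which pairs nontrivially with $l_j^+$ by (ii) and (iii) while every $\partial E/\partial W$ column pairs with $l_j^+$ in a controlled (lower-rank) way. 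Pairing the whole Jacobian on the left by $l_j^+$ isolates this and shows the determinant is nonzero. Then the Implicit Function Theorem yields smooth functions $U_j^+\mapsto\alpha(U_j^+)$ and $U_j^+\mapsto W(U_j^+)$ near $(U_j^+)^*$ satisfying \eqref{RH12a1}.

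Finally I would verify the two asserted identities. Equation \eqref{rh234a121}, $\lambda_i(U^-(\alpha(U_j^+)))=s(U^-(\alpha(U_j^+)),U^+(U_j^+))$, follows because $E(U^-,U^+)=0$ with the definition \eqref{RH1} of $E$ forces the shock speed determined by Rankine–Hugoniot to equal $\lambda_i(U^-)$ (this is how $E$ was built, combining \eqref{RH} and \eqref{rh234a}); one must only check that along the constructed curve the Rankine–Hugoniot relation has $\lambda_i(U^-)$ as its speed, which is immediate since $E$ carries $\lambda_i(U^-)$ in place of $s$. The ODE identity $\frac{dU^-}{dU_j^+}\frac{dU_j^+}{d\alpha}=r_i(U^-(\alpha(U_j^+)))$ is just the chain rule applied to the composition $U_j^+\mapsto\alpha\mapsto U^-(\alpha)$ together with \eqref{RH4}: $\frac{dU^-}{d\alpha}=\tilde r_i(U^-)$ and $r_i$ and $\tilde r_i$ differ by the scalar normalization $l_i\cdot r_i$, which can be absorbed into the parametrization (the statement writes $r_i$, consistent with the paper's convention of reparametrizing freely). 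I expect the main obstacle to be the rank/determinant argument in paragraph two — carefully identifying the one-dimensional degeneracy of $\partial E/\partial(W,U_j^+)$ and proving that replacing the $U_j^+$-column by the $\alpha$-column restores invertibility, which is where hypotheses (ii) and (iii) and the single-index assumption all get used simultaneously.
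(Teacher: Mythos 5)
Your proposal follows essentially the same route as the paper: split $U^+=(W,U_j^+)$, form the augmented map $S(\alpha,W,U_j^+)=E(U^-(\alpha),W,U_j^+)$, show that the $n\times n$ block $[D_\alpha E \mid D_W E]$ has full rank because $D_W E$ has rank $n-1$ with range annihilated by $l_j^+$ while the $\alpha$-column $(\triangledown\lambda_i\cdot\tilde r_i)(G((U^+)^*)-G((U^-)^*))$ pairs nontrivially with $l_j^+$ by (ii) and (iii), and then apply the Implicit Function Theorem with $U_j^+$ as the parameter. Your version actually spells out the rank argument slightly more explicitly than the paper, which simply asserts $\operatorname{rank}(D_{\alpha,W}S)=n$, but the decomposition, the key Jacobian, and the use of the hypotheses are identical.
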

	
	\begin{proof}
	%	These conclusions follow from straightforward application of equation \eqref{RH3a2}.
	
	From hypothesis (i), the matrix $D_{U^+}E=DF((U^+)^*)-\lambda((U^-)^*)DG((U^+)^*)$ has rank $n-1$ and is therefore equivalent to the matrix $ diag(s_1 (\lambda_1^+-\lambda_i^-),\cdots,0,\cdots,\\s_n (\lambda_n^+-\lambda_i^-))$, with zero in the j-$th$ position. We consider $E(U^-,U^+)$ as defined in \eqref{RH1}.
	
	We define the map $ S : (\alpha, W,U_j^+) \rightarrow E(U^-(\alpha),W,U_j^+,)$ with total differential at point $(0,(U^+)^*)$ given by
	\begin{equation}
	dS =  D_{W}E dW +D_{U_j^+}E dU_j + D_{\alpha}E d\alpha=0.
	\end{equation}
	with
	\begin{equation}
D_{\alpha}E =\triangledown \lambda_i \cdot \tilde{r}_i ((U^-)^*) (G((U^+)^*)-G((U^-)^*)),
	\end{equation}
\begin{equation}
D_{W}E=D_{W}F((U^+)^*)-\lambda((U^-)^*)D_{W}G((U^+)^*),
\end{equation}	
where $D_{W}E$ does not \gui{contain} the $j$-th column of the Jacobian $D_{U^+}E$,
and
\begin{equation}
D_{U_j^+}E=D_{U_j^+}F_j((U^+)^*)-\lambda((U^-)^*)D_{U_j^+}G_j((U^+)^*).
\label{key1}
\end{equation}	
By hypothesis (ii), (iii) and since $rank(D_{W}E)=n-1$, we have $rank(D_{W,\alpha}S)=n$ with 
$D_{\alpha,W}S = [D_{\alpha}E \ | \ D_{W}E]$. By the Implicit Function Theorem, there exists an open set $I_1$ of $\Re$ containing $(U_j^+)^*$ and an unique continuously differentiable function $h: I_1 \rightarrow \Re^{n}$, $h=(h_1,h_2)$ such that
\begin{equation}
S(\alpha(U_j^+), W(U_j^+),U_j^+)=E(U^-(h_1(U_j^+)),h_2(U_j^+),U_j^+)=0.
\end{equation}
Thus we obtain a parametrization $U_j^+: I_1 \rightarrow h_1(U_j^+)=\alpha(U_j^+)$
and $U^+: I_1 \rightarrow W(U_j^+)=h_2(U_j^+)$.

In order to obtain the parametrization for $\alpha$ in function of $U_j^+$, we solve the ODE
\begin{equation}
\left[\frac{d \alpha}{ d U_j^+},\frac{d h_2(U_j^+)}{ d U_j^+}\right]^T =\sum_{i=1}^{n} ([D_{\alpha,W}S]^{-1})_{ki}D_{U_j^+}E(U^-(\alpha(U_j^+)),h_2(U_j^+)),
\end{equation}
with $k=1,\ldots,n$,  $\alpha(0)=0$ and $D_{U_j^+}E$ given by \eqref{key1} ($d/dU_j^+$ denotes the total differentiation respect to the one dimensional variable $U_j^+$).

	\end{proof}
	
	Under the hypothesis of Lemma \eqref{lemv1}, Furtado studied in \cite{furtado1991structural} the situation of $\triangledown \lambda_i(U^-) \cdot \tilde{r}_i (U^-)  \neq 0$, $( l_j^+ \cdot (U^+-U^-)) \neq 0$ and $rank(D_{U^+}E)=n-1$. This work provides a method to construct a parametrization for Hugoniot and rarefaction wave curves which is also useful in the construction of composite curves.\\

\textbf{Case c)} There exists a bifurcation point $((U^+)^*,(U^-)^*)$ when $(\triangledown \lambda_i((U^-)^*) \cdot \tilde{r}_i ((U^-))^*)( l_j^+ \cdot (G((U^+)^*)-G((U^-)^*))= 0$ and $\lambda_{j}((U^+)^*)-\lambda_i((U^-)^*)=0$.

In this case, we cannot guarantee the existence of composite curves since the Implicit Function Theorem is not applicable to equation \eqref{RH1as1} with
$E$ given by \eqref{RH1}. That is because $rank(D_{U^+}E) < n$, since $det(A(U^+,U^-))=0$. Consequently, it is not possible to equation \eqref{RH1as1} by means of a local diffeormophism between states $U^-$ and $U^+$.

A solution to this difficulty was given by \cite{muller2001existence} in the particular case of $\triangledown \lambda_i(U^-) \cdot \tilde{r}_i (U^-) =0$ and $rank(D_{U^+}E)=n-1$. In that work, Theorem $1$ states that it is possible to construct the composite wave curve for the strictly hyperbolic case in a simple degeneration point where $\triangledown \lambda_i^- \cdot r_i(U^-)=0$ and $\triangledown_{U} (\triangledown \lambda_i^- \cdot r_i) \cdot r_i(U^-) = 0$. The proof given in \cite{muller2001existence} also serves to show the existence of a local parametrization for the composite curve in the case analyzed here, but does not provide a robust numerical algorithm to obtain it.\\

 The method described in \cite{furtado1991structural}, together with the Lyapunov-Schmidt reduction principle (see \cite{golubitsky2012singularities}), is useful for the construction of composite wave curves for the case of $l_{j}^+ \cdot (G((U^+)^*)-G((U^-)^*)) = 0   $, where  $l_{j}^+=l_{j}((U^+)^*)$. This construction is summarized in the Lemma below.
 
 \begin{lemma}
 	Let $(U^+)^*$ be a point on the Hugoniot locus based on $(U^-)^*\neq(U^+)^*$. Assume that
 	\begin{itemize}
 		\item[i)] there exists only one index value $j$ such that $\lambda_i((U^-)^*)=s((U^-)^*,(U^+)^*)=\lambda_j((U^+)^*)$,
 		\item[ii)] $\triangledown \lambda_i \cdot \tilde{r}_i ((U^-)^*) \neq 0,$
 		\item[iii)] $l_{j}^+ \cdot (G((U^+)^*)-G((U^-)^*)) = 0 $,
 		\item[iv)] $l_{j}^+ \cdot ((U^+)^*) \neq 0   $.
 	\end{itemize}

 	Then, there is a parametrization for the integral curve
 	$\alpha \rightarrow U^-(\alpha)$ satisfying \eqref{RH4}  and $U^-(0)=(U^-)^*$. Moreover, there is also a parametrization 
 	$\beta \rightarrow U^+(\beta)$ with $ U^+(\beta^*)=(U^+)^*$  satisfying  \eqref{RH1}, i.e.
 	\begin{equation}
 	E(U^-(\alpha),U^+(\beta))=0,
 	\label{RH12a}
 	\end{equation}
 	where $E(U^-,U^+)$ is defined in \eqref{RH1} and satisfies \eqref{rh234a}, i.e.
 	\begin{equation}
 	\lambda_i(U^-(\alpha))=s(U^-(\alpha),U^+(\beta)).
 	\label{rh234a12}
 	\end{equation}
  \end{lemma}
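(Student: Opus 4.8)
The plan is to prove the Lemma by a Lyapunov--Schmidt reduction, following the strategy of \cite{furtado1991structural} for the parametrization of the Hugoniot locus and of \cite{muller2001existence} for the composite curve (see also \cite{golubitsky2012singularities}), adapted to the resonant configuration described by hypotheses (i)--(iv). The parametrization $\alpha\mapsto U^-(\alpha)$ is the easy half: since $\triangledown\lambda_i\cdot\tilde r_i((U^-)^*)\neq 0$ by (ii), the eigenvalue $\lambda_i$ is simple in a neighbourhood of $(U^-)^*$ and $\tilde r_i$ is smooth there, so the initial value problem \eqref{RH4} with $U^-(0)=(U^-)^*$ has a unique $C^1$ solution; this is the rarefaction branch that enters the composite curve. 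The content of the Lemma is the construction of the matching branch $\beta\mapsto U^+(\beta)$. Writing $E$ as in \eqref{RH1}, hypothesis (i) gives $s((U^-)^*,(U^+)^*)=\lambda_i((U^-)^*)$, whence $E((U^-)^*,(U^+)^*)=0$; and $D_{U^+}E((U^-)^*,(U^+)^*)=DF((U^+)^*)-\lambda_i((U^-)^*)DG((U^+)^*)$ has rank exactly $n-1$ (a simple zero of $\det$, since by (i) only the index $j$ realizes the coincidence), with right kernel spanned by the generalized eigenvector $r_j^+=r_j((U^+)^*)$ and left kernel spanned by $l_j^+$, so that $\operatorname{Range}(D_{U^+}E)=\{v:l_j^+\cdot v=0\}$. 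In contrast with Lemma \ref{lemv1}, the augmented Jacobian $[\,D_\alpha E\mid D_{U^+}E\,]$ also has rank only $n-1$: differentiating along the rarefaction, $D_\alpha E=D_{U^-}E\cdot\tilde r_i=-(\triangledown\lambda_i\cdot\tilde r_i)\bigl(G((U^+)^*)-G((U^-)^*)\bigr)$ because $(DF-\lambda_iDG)r_i$ vanishes at $U^-$, and this is annihilated by $l_j^+$ thanks to (iii). Hence none of the earlier Implicit Function Theorem arguments applies, and a genuine bifurcation analysis is required.

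For the reduction I would fix splittings $\mathbb R^n=\mathbb R r_j^+\oplus Y$ of the $U^+$-space and $\mathbb R^n=\operatorname{Range}(D_{U^+}E)\oplus\mathbb R l_j^{+}$ of the target space of $E$, with associated projections $P$ (onto the range) and $Q=I-P$. Substituting $U^+=(U^+)^*+t\,r_j^++y$ with $y\in Y$, I solve the $(n-1)$ ``range'' equations $P\,E(U^-(\alpha),(U^+)^*+t r_j^++y)=0$ for $y=y(\alpha,t)$ with $y(0,0)=0$, which is legitimate by the Implicit Function Theorem since $D_y(P E)=P\,D_{U^+}E|_Y$ is an isomorphism of $Y$ onto $\operatorname{Range}(D_{U^+}E)$. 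There remains the scalar bifurcation equation
\begin{equation}
g(\alpha,t):=l_j^{+}\cdot E\bigl(U^-(\alpha),\,(U^+)^*+t\,r_j^++y(\alpha,t)\bigr)=0 .
\end{equation}

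The heart of the argument is the local study of $g$ near $(0,0)$. One checks first $g(0,0)=0$ (base point solves $E=0$ by (i)) and $\nabla g(0,0)=0$: the $t$-derivative vanishes because $l_j^{+}\cdot D_{U^+}E=0$, and the $\alpha$-derivative vanishes because $D_{U^-}E\cdot\tilde r_i$ is proportional to $G((U^+)^*)-G((U^-)^*)$, which is orthogonal to $l_j^{+}$ by (iii). Thus $g$ has a critical point at the origin. I would then compute the Hessian $\operatorname{Hess}g(0,0)$ explicitly in terms of the Hessians of $F$ and $G$, the vectors $r_j^+$, $l_j^+$, $\tilde r_i$, and the scalars $\triangledown\lambda_i\cdot\tilde r_i$ and $\triangledown(\triangledown\lambda_i\cdot\tilde r_i)\cdot\tilde r_i$, and invoke hypothesis (iv), $l_j^{+}\cdot(U^+)^*\neq 0$ (the resonant analogue of the Furtado transversality condition $l_j^{+}\cdot(U^+-U^-)\neq0$), to conclude that $\operatorname{Hess}g(0,0)$ is a nondegenerate indefinite quadratic form. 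By the Morse Lemma, the zero set $\{g=0\}$ is, near $(0,0)$, the union of two smooth curves through the origin meeting transversally; along the branch that continues the incoming composite curve, one of $\alpha$, $t$ is a local coordinate, which provides a parameter $\beta$ (with $\beta^*$ its value at the base point) such that $\alpha=\alpha(\beta)$, $t=t(\beta)$, and we set $U^+(\beta)=(U^+)^*+t(\beta)r_j^++y(\alpha(\beta),t(\beta))$, $U^+(\beta^*)=(U^+)^*$. By construction $PE=0$ and $QE=0$ along this branch, hence $E(U^-(\alpha(\beta)),U^+(\beta))=0$, which is \eqref{RH12a}; and since $E=0$ together with $G(U^+)\neq G(U^-)$ (valid near the base point, as $(U^+)^*\neq(U^-)^*$) forces the Rankine--Hugoniot speed to equal $\lambda_i(U^-)$, identity \eqref{rh234a12} follows. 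Composing $y(\cdot,\cdot)$ with \eqref{RH4} finally yields the asserted ODE relating $U^-$ and the parameter.

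The main obstacle is exactly the Hessian computation together with the verification that hypothesis (iv) renders $\operatorname{Hess}g(0,0)$ nondegenerate of the correct signature (indefinite, or at least admitting a real factorization into two distinct linear forms); this is where hypotheses (i)--(iv) are genuinely used, and it is the resonant counterpart of the second-order analysis of \cite{muller2001existence} and of the transversality step in \cite{furtado1991structural}. Two subsidiary points also need care: singling out the admissible branch of $\{g=0\}$ as the continuation of the composite curve, and confirming that $\tilde r_i$ and the data $r_j^+,l_j^+$ remain smooth along the curve near the base point --- which is ensured because (ii) makes $\lambda_i$ simple at $(U^-)^*$ and (i) isolates the index $j$ at $(U^+)^*$.
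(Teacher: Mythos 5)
Your reduction is set up correctly: the first-order facts you record ($E=0$ at the base point, $\operatorname{rank}D_{U^+}E=n-1$ with left kernel spanned by $l_j^+$, $D_\alpha E$ proportional to $G((U^+)^*)-G((U^-)^*)$ and hence annihilated by $l_j^+$ because of (iii), so that the augmented Implicit Function Theorem of the previous case fails) all match the situation the lemma addresses. But the proposal stops exactly where the proof has to begin. You announce that you ``would compute'' $\operatorname{Hess}g(0,0)$ and ``invoke hypothesis (iv)'' to obtain a nondegenerate indefinite form, yet you neither carry out that computation nor explain why $l_j^{+}\cdot(U^+)^*\neq 0$ --- a condition involving no second derivatives of $F$ or $G$ and not even invariant under translating the origin of state space --- should control the signature of a Hessian built from $D^2F$, $D^2G$ and $\triangledown(\triangledown\lambda_i\cdot\tilde{r}_i)\cdot\tilde{r}_i$. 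Under your strategy that Hessian analysis \emph{is} the lemma, so what you have is a plan rather than a proof, and the specific claim that (iv) delivers Morse nondegeneracy is, as far as I can see, misattributed.

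The paper in fact uses (iv) in a purely first-order way and never confronts the scalar bifurcation equation. It splits $\Re^n=V\oplus V^\perp$ with $V=\{U:l_j^+\cdot U=0\}$, observes that (iv) says precisely $(U^+)^*\notin V$, so that writing $U^+=U^\bot(\beta)+U^\top$ with $U^\bot(\beta)=(U^+)^*+(\beta-\beta^*)r_i((U^+)^*)$ and $U^\top\in V$ is a legitimate decomposition, and then applies the Implicit Function Theorem to the $n$ equations $\bigl(P_VE,\,l_j^+\cdot U^\top\bigr)=0$ in the $n+1$ unknowns $(\alpha,\beta,U^\top)$, solving for $(\alpha,U^\top)$ as functions of $\beta$; the required rank condition comes from $\operatorname{rank}(P_VJ)=n-1$ together with $D_\alpha P_VE\neq0$, which uses (ii) and (iii) (the latter guaranteeing that $G((U^+)^*)-G((U^-)^*)$ lies in $V$ and so survives the projection). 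In other words, the paper trades the scalar equation $l_j^+\cdot E=0$ for the linear normalization $l_j^+\cdot U^\top=0$ plus an affine parametrization transverse to $V$, whereas you keep the full system but leave its one nontrivial equation unanalyzed. To complete your version you must supply the explicit Hessian of $g$ and identify which hypothesis (or which additional nondegeneracy assumption) makes it an indefinite nonsingular form; without that, the Morse-lemma step and the selection of the continuing branch are unsupported.
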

 
 \begin{proof}
 	Let $U^-(\alpha)$ be the solution of \eqref{RH4} with $U^-(0)=(U^-)^*$ 	
 	and such that $\triangledown \lambda_i \cdot \tilde{r}_i (U^-(\alpha)) < 0$. We consider $E(U^+,U^-)$ as defined in \eqref{RH1} and the map $ S : (\alpha, U^+) \rightarrow E(U^+,U^-(\alpha))$ with total differential at point $(0,(U^+)^*)$ given by
 	\begin{equation}
dS =  D_{U^+}E dU^+ + \triangledown \lambda_i \cdot \tilde{r}_i ((U^-)^*) (G((U^+)^*)-G((U^-)^*)) d\alpha=0.
 	\end{equation}
 From hypothesis $i)$, we see that $J=D_{U^+}E=DF((U^+)^*)-\lambda((U^-)^*)DG((U^+)^*)$ has rank $n-1$ when restricted to subspace
 	\begin{equation}
 	V=\{U \in \Re^n : l_j((U^+)^*) \cdot U=0\}.
 	\end{equation}
 Additionally, from iii) we verify that it is not possible to obtain a parametrization for variable $U^+$ as a function of $\alpha$ from the Implicit Function Theorem. We construct such a parametrization by means of an appropriate partition of the space and an additional equation.
 		
 We take a partition
 \begin{equation}
\Re^n=V\oplus V^\perp
 \end{equation}	
with $dimV=dimJ=n-1$ and $dimV^\perp=1$. 	
 	
Since $(U^+)^* \notin V$ by hypothesis $iv)$, for each $U \in \Re^n $ there are two unique $U^\top \in V$ and $U^\bot \in V^\perp$ such that $U=U^\top+U^\bot$, where $U^\bot$ can be parametrized as
\begin{equation}
U^\bot(\beta)=(U^+)^*+(\beta-\beta^*)r_i((U^+)^*),
\label{com1}
\end{equation} 
and $r_i$ is the right eigenvector associated to eigenvalue $\lambda_i((U^+)^*)$ of the generalized eigenproblem \eqref{ge1}. 

The conclusion that there is a unique $\beta=\beta^*-((U^+)^* \cdot U^\top)/(r_i((U^+)^*) \cdot U^\top)$, where $r_i((U^+)^*) \cdot U^\top \neq 0$, follows from the facts that: ($i$) for each $U^\bot \in V^\perp$, the product $U^\bot \cdot U^\top =0$; ($ii$) the corresponding state $(U^+)^*$ satisfies $(U^+)^* \notin V$ and $l_i \cdot r_i ((U^+)^*) \neq 0$ (where $l_i$ is the left eigenvector associated to $\lambda_i((U^+)^*)$).	
 	
Let $P_V$ be the projection of a subset of points satisfying $E(U^+,U^-)=0$ onto subspace $V$. We consider the system of equations for variables $(\beta,\alpha,U^\top)$
\begin{equation}
P_V(E(U^-(\alpha),(U^+)^*+(\beta-\beta^*)r_i((U^+)^*)+U^\top))=0,
\label{RHG1}
\end{equation}
and 
 \begin{equation}
M(U^\top)=l_j((U^+)^*) \cdot U^\top=0,
\label{RHG2}
 \end{equation}	
where $E$ is given by \eqref{RH1} and $U^\top \in V$. 

System of equations \eqref{RHG1}-\eqref{RHG2}
defines a map $\varGamma: \Re^{n+1} \rightarrow \Re^{n}$ expressed as $\varGamma(\alpha,\beta,U^\top)=(P_VE(U^-(\alpha),U^\bot(\beta)+U^\top),M(U^\top))$, where $U^\bot(\beta) \in V^\perp$ follows parametrization \eqref{com1} and $U^\top \in V$.  
We verify that, at the point $(\beta^*,0,P_V((U^+)^*))$ with $(U^+)^*=P_V((U^+)^*)+((U^+)^*)^\perp$, 
\begin{equation}
D_UP_VE=P_VJ,
\end{equation}
\begin{equation}
D_{\alpha}P_VE=\triangledown \lambda_i \cdot \tilde{r}_i ((U^-)^*)  P_V(G((U^+)^*)-G((U^-)^*)),
\end{equation}
\begin{equation}
D_{\beta}P_VE=[DF((U^+)^*)-\lambda((U^-)^*)DG((U^+)^*)]r_i((U^+)^*).
\end{equation}
Therefore $rank(D_UP_VE)=n-1$, and since $\triangledown \lambda_i \cdot \tilde{r}_i ((U^-)^*)  \neq 0$ we have $D_{\alpha}P_VE \neq 0$ and $D_{\beta}P_VE =0$.
Also note that
\begin{equation}
D_U(M(U^\top))=P_Vl_j((U^+)^*).
\end{equation}

Then, we have the jacobian matrix $D_{\beta,\alpha,U}\Gamma = [D_{\beta}\Gamma \ | \ D_{\alpha,U}\Gamma]$ of size $n \times n+1$ written as
$$
\begin{bmatrix}
0_{n-1,1}  & \triangledown \lambda_i \cdot \tilde{r}_i ((U^-)^*) P_V(G((U^+)^*)-G((U^-)^*)) & P_VJ \\
0 & 0 &P_V l_j^+
\end{bmatrix},
$$
where $0_{n-1,1}$ stands for the null column vector of $n-1$ elements.

Since $D_{\alpha,U}\Gamma$ has rank $n$, then by the Implicit Function Theorem there exists an open set I of $\Re$ containing $\beta^*$ and a unique continuously differentiable function $g: I \rightarrow \Re^{n+1}$ such that
$g(\beta^*)=(0,P_V((U^+)^*))$ and 
\begin{equation}
\Gamma(\alpha(\beta),g(\beta))=0.
\end{equation}
Thus we obtain a parametrization $\alpha: I \rightarrow \alpha(\beta)=g_1(\beta)$
and $U^+: I \rightarrow U^+(\beta)=g_2(\beta)$.

 \end{proof}	
	
We obtain the parametrization $U^+: I \rightarrow U^+(\beta)=g_2(\beta)$ numerically by solving system \eqref{RHG1}-\eqref{RHG2} with a Quasi-Newton method for each $\alpha$.\\

\textbf{Case d)} If $\triangledown \lambda_i(U^-) \cdot \tilde{r}_i (U^-)  \equiv 0$  on a submanifold $\Sigma$ of codimension $1$, then we have 
\begin{equation}
\left(DF(U^+)-\lambda_i(U^-)DG(U^+)\right)\frac{d U^+}{d \xi}=  0.
\label{RH3a12}
\end{equation}
If $\frac{d U^+}{d \xi}\neq 0$, then necessarily $det(A(U^+,U^-))=0$ and therefore there are $j=i+1,i-1$ such that $\lambda_j(U^+)=\lambda_i(U^-)$ and can be taken as
\begin{equation}
\frac{d U^+}{d \xi}=r_j(U^+),
\label{edo5}
\end{equation}
with $U^- \in \Sigma$ and $r_j$ is the generalized eigenvector associated to $\lambda_j$ .
%We obtain the composite wave curve by integration of \eqref{edo5} along $\Sigma$.

\textbf{Case d)}This case reduce to
\begin{equation}
\left(DF((U^-)^*)-\lambda_i((U^-)^*)DG((U^-)^*)\right)\frac{d U^+}{d \xi}=  0.
\label{RH3a12}
\end{equation}
If $\frac{d U^+}{d \xi}\neq 0$, then $det(A(U^+,U^-))=0$ and therefore we obtain the composite field at point $(U^-)^*$ as
\begin{equation}
\frac{d U^+}{d \xi}=r_j((U^-)^*).
\label{edo5}
\end{equation}

\subsection{Application to the Quadratic Corey permeability model}

Here we apply the construction of the composite wave curve shown in Section  \ref{comp} to the Riemann problem in the Corey Quad model (see \cite{azevedo1995multiple}). Numerical implementation were done with the exact Riemann solver RPN (http://rpn.fluid.impa\\.br/). The program is based on the elementary wave curves, i.e. rarefaction and shock curves. Rarefactions are the integral curves along the right eigenvector whose direction corresponds to increasing eigenvalues. Admissible shocks are obtained from the Hugoniot locus, which is obtained numerically by the continuation and quasi-newton methods.
This exact Riemann solver contains the construction method of the Riemann solution taking into account the bifurcation structures, such as the
inflection, secondary bifurcation, hysteresis, double contact and coincidence loci. ODE solver and algebraic reconstruction methods of curves are used. Moreover, other useful curves as extension through and continuation are used in the construction of wave curves (see definition of all this concepts in \cite{matos2015bifurcation} and bibliography cited there in). 

In all cases, the program can be adjusted to any particular model. However, the Riemann solver allows a major degree of generalization such that the algorithms can be extended to solve the Riemann problem for any system of conservation laws. Some examples using this program  are \cite{Matos2015,Matos2016,matos2017compositional}

This exact Riemann solver is useful as a validation tool of the numerical schemes and vise versa. In this sense, a numerical model based on finite difference schemes was incorporated which enables a cross validation. However, Riemann solver presented numerous advantages because it allows the determination of structure in the solution at different stages and their relation with physical phenomena associated with the model. The bifurcation analysis serves to determined those region where abrupt changes of solution arise. For example, for the wave curve method within this solver can be used to estimate the optimum initial condition for oil recovery.  
 
 As an application of the solver, we present a numerical example where we consider flow fields when singularities appear in the construction of local and non-local composite curves. We use the quadratic model consisting of the Cauchy problem for the system of conservation laws
\begin{equation}
\frac{\partial u}{\partial t}+\frac{\partial}{\partial x}\left[\frac{\alpha u^2}{\alpha u^2+\beta v^2+\gamma(1-u-v)}\right]=0,
\end{equation}
\begin{equation}
\frac{\partial v}{\partial t}+\frac{\partial}{\partial x}\left[\frac{\beta v^2}{\alpha u^2+\beta v^2+\gamma(1-u-v)}\right]=0,
\end{equation}
where $\alpha$, $\beta$ and $\gamma$ are positive constant and $(u,v) \in \Omega:=\{0 < u+v < 1, u,v >0 \}$

The general construction principle
for the Riemann solution is based on the scale-invariance of its
solution. In general, the Riemann solution is composed of different wave curves in the state space and waves in the time-space continuum which correspond to different characteristic velocities. Since the Corey Quad model is non-strictly hyperbolic and is non-genuinely nonlinear, the construction
of Riemann solution requires that the composite wave to crosses the inflection locus.
 We have two families, each corresponding to eigenvalue
$\lambda_i$ $(i=1,2)$, and  only one coincidence point called the umbilic point (see \cite{azevedo1995multiple}).
The Riemann solution is composed of intersection points of the
different wave curves each corresponding to a characteristic field in the phase space
connecting the two initial states. To find these points and make appropriate changes to curve is one of the challenges
of the Riemann solver. The Riemann solver is capable to determinate wave curves in the phase state. However,
for non-strictly hyperbolic case the classical waves types as described by \cite{liu1975riemann} are not sufficient to construct the wave curve. Therefore, new way as composite curves or transitional shock must be considered. 

With these tools in mind, we exemplify the Riemann solution for two particular Riemann problems. Let $L=(0.3389,0.5906)$ be the left state and take the right states $R_1=(0.0794,0.8581)$ and $R_2=(0.5819,0.2763)$.  To solve the Riemann problem defined by $L$ and $R_1$, we first choose a forward wave curve of the first family starting at state $L$ and a backward wave curve of the second family from the state $R_1$. 
We continue the forward wave curve with a sequence of admissible waves until we arrive
at the physical boundary. Clearly, in the construction of forward wave curve, we provide a procedure to construct a composite wave curve when necessary to cross the bifurcation curves. Then we determine the intersection point between these forward and backward wave curve and check if the corresponding state is admissible (see Figure \ref{figF1}).

A Riemann profile is produce in the x-t plane with each point corresponding to only one in the spaces of states. In this representation, the Riemann solution from state $L$ to $R_1$ consists of: from $L$ to $A$ a rarefaction of the family 1, from $A$ to a constant state $B$ a shock of the family 1 with $\sigma(A,B)=\lambda_1(A)$, and finally from the state $B$ to $R_1$ by a shock of the family two (see profile in Figure \ref{figFa1}).

To construct the Riemann solution from $L$ to $R_2$ we take from $L$ to $A$ a rarefaction of the family 1, from $A$ to $C$ a shock of the family one such that $\sigma(A,C)=\lambda_1(A)$ followed by a shock of the family two from the constant state $B$ to $R_2$.

The shocks from $A$ to $B$ and $A$ to $C$ is obtained from the construction of the composite curve starting at state $L$. Such curves in this case have two branches. The branch containing state $C$ is called local because it starts at the inflection locus, while the other branch that contains the state $B$ is the non-local branch.

The construction of the composite curves by the continuation method consists of the following \textbf{Algorithm} \textbf{4.1}: \\

\gui{\noindent{\textbf{Algorithm 4.1:}}}
\begin{itemize}
	\item[1)] Construct a rarefaction of the family $i$ solving ODE \eqref{RH4} starting at $U_L$ belongs to the region  $\Omega_1=\{U: \lambda_i(U) < \lambda_{i+1}(U)\}$ and stopping at the inflection point $U^+_{if}$,
	
	\item[2)] Calculate the extension point $U^+_{eif}$ (belongs to the region  $\Omega_2=\{U: \lambda_i(U) > \lambda_{i+1}(U)\}$) of the above rarefaction with a shock satisfying $\sigma(U^-_{if}-\epsilon,U^+_{eif})=\lambda(U^-_{if}-\epsilon)$, for fixed small
	$\epsilon$ such that the rarefaction point $U^-_{if}-\epsilon$ belongs to the region $\Omega_1$,
	
	\item[3)] Construct a local composite wave solving the ODE \eqref{RH3}-\eqref{RH4} starting at the initial point  $(U^+_{if}-\epsilon,U^-_{eif})$.	The initial direction of integration of rarefaction is such that $\lambda_i$ decreases, i.e. $\triangledown \lambda_i \cdot r_i < 0$ with initial direction $r^o_i$ and
	the initial direction of composite field in \eqref{RH3} is $-r^o_i$,
	
	\item[4)] Calculate the non-local initial point on the secondary bifurcation $U^+_{cnl}$ such that $\sigma(U^-_{il},U^+_{cnl})=\lambda(U^-_{il})$ with $U^+_{cnl}\neq U^+_{if}$,
	
	\item[4a)] If the point in step $(4)$ does not exist then neither does it exist the non-local composite wave,
	
	\item[4b)] If the point in step $(4)$ does exist and $det(A(U^+_{cnl},U^-_{inl})) \neq 0$ then calculate the non-local composite curve solving the ODE \eqref{RH3}-\eqref{RH4} starting at the initial point $(U^+_{cnl},U^-_{inl})$,
	with initial direction $r^o_i$ such that $\triangledown \lambda_i \cdot r_i < 0$ and
	the initial direction of composite field  is the right hide side
	of \eqref{RH3} evaluated at point $(U^+_{cnl},U^-_{inl})$,
	
	\item[4c)] If the point in step $(4)$ does exist and $det(A(U^+_{cnl},U^-_{inl})) = 0$ then calculate the non-local composite curve solving the ODE \eqref{RH3}-\eqref{RH4} starting at the initial point $(U^+_{cnl}+\epsilon_1 V_c,U^-_{inl})$,
		where $V_c$ is the value of initial composite field but in the direction where $det(A(U^+_{cnl}+\epsilon_1 V_c,U^-_{inl}))<0$.
\end{itemize}

From now, we give a theoretical justification of parts of the above algorithm and some 
commentaries about numeric implementation of the composite curve.

When the rarefaction curves of the family $i$ arrive at an inflection locus or the boundary of state space, we need the ODE solver to stop automatically. To do so, we take a plane parallel to the surface that one wants to stop. The integration is continued until the distance to such plane is small enough.

Since the composite curve is the concatenation of a rarefaction with a characteristic shock, the initial point of the rarefaction is exactly the intersection of the extension curve of the secondary bifurcation curve with the rarefaction, i.e. $\sigma(U^-_{il},U^+_{if})=\lambda(U^-_{il})$.

In Algorithm \textbf{4.1} we use inequality $det(A(U^+_{cnl}+\epsilon_1 V_c,U^-_{inl}))<0$ in order to choose the correct direction, since the determinant is negative for Lax admissible shocks.

%el tema de determinante negativo
%el campo en el punto inicial de la compuesta, pois os campos estan acoplados,
%y la refaccacion va en el sentido contrario.
%la direccion inicial tiene que ser la correcta.

\begin{figure}
	\centering
	\includegraphics[width=0.5\textwidth]{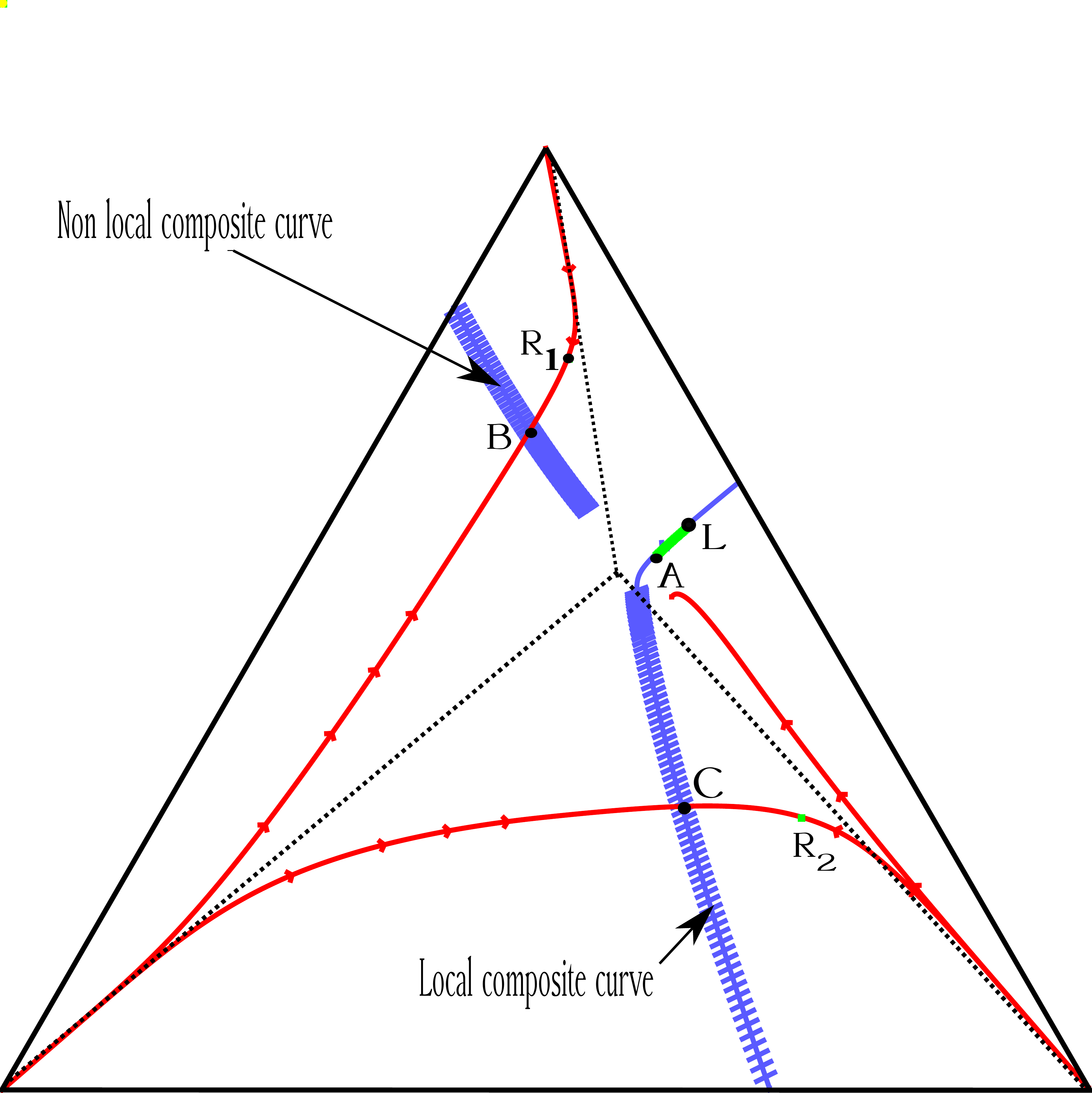}
	\caption{Triangular geometry of the space phases. Red curves represent backward shock wave curves from points $R_1$ and $R_2$. Green color curve represents a rarefaction curve from $L$ to $A$. Cyan curve represents the local and non-local composite wave curve corresponding to the rarefaction wave curve from $L$ to $A$. Dotted black curve represent the inflection locus which coincides in the unique umbilic point.}
	\label{figF1}
\end{figure}

\begin{figure}
	\centering
	\includegraphics[width=0.55\textwidth]{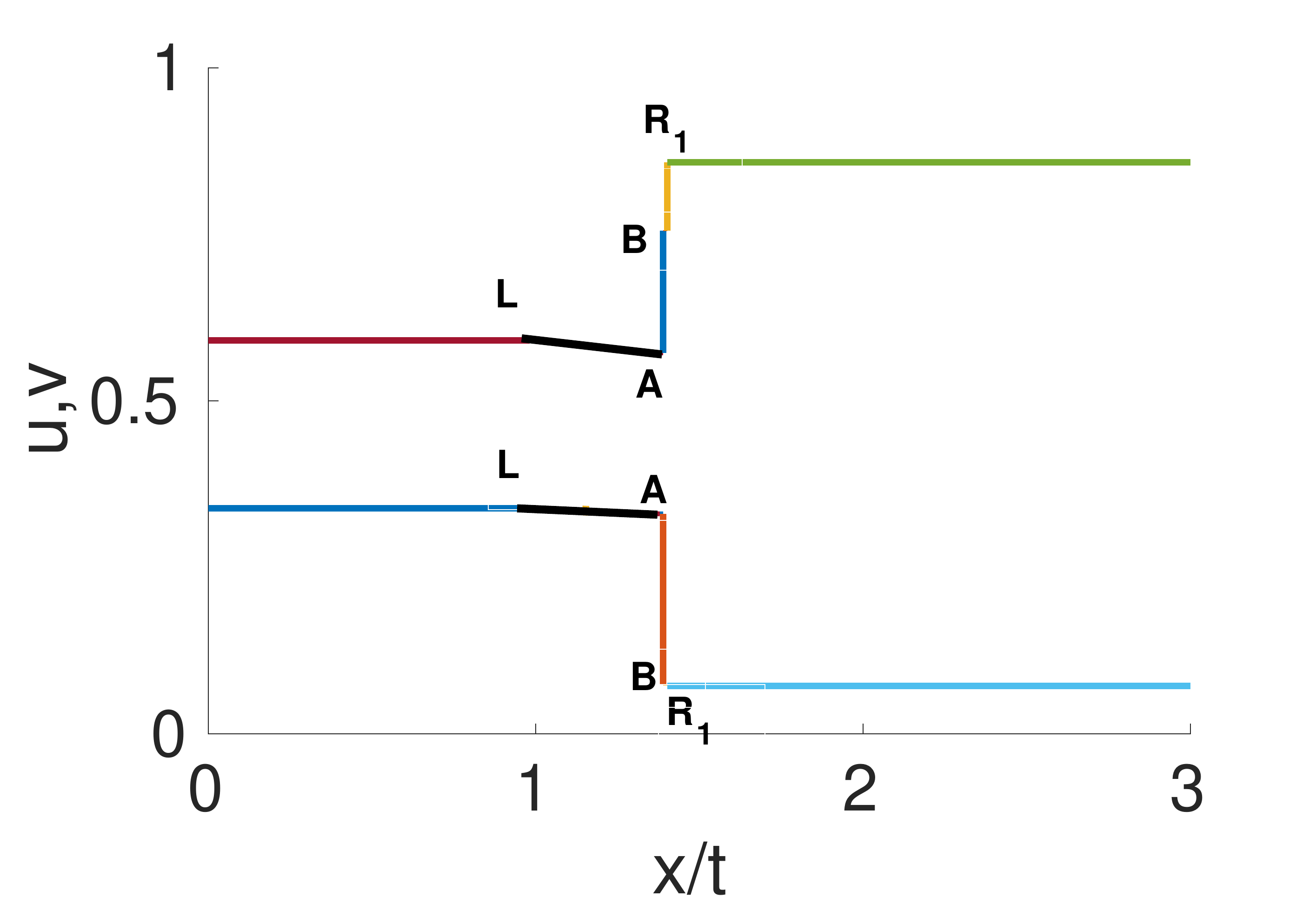}
	\caption{Profile functions corresponding with the wave curve in the space phases. The first corresponds to the wave curve with non-local while the second to the local composite wave curve.}
	\label{figFa1}
\end{figure}

\section{Wave curve construction}
	
A general numerical procedure for the construction of wave curves in the state space consists of several factors:
\begin{itemize}
	\item a set of input suitable parameters for the ODE solver,
	\item procedures to find the bifurcation curve (e.g. coincidence and inflection loci) 
	\item criteria to either stop integration or change to an a appropriate continuation algorithm beyond these intersection points,
	\item admissibility criteria for shock and rarefaction wave curves,
	\item check for monotonicity of characteristic velocities and stopping criteria for the solver.
\end{itemize}

\subsection{Starting point}
	
Of course, the characteristic field must be well defined and point to an admissible direction at the initial point.
	
The algorithm has peculiar behavior when the starting point $U^-$ lies on the coincidence or inflection loci. First, it explores the neighborhood of the initial point to discover the directions on which eigenvalues increase or decrease. Assuming that eigenvalues increase along forward rarefactions and decrease along backward rarefactions, there are three scenarios of rarefaction wave curves passing through point $U^-$: two forward rarefactions; one forward and one backward; or two backward rarefactions. Admissible shocks are also possible in the direction contrary to the rarefaction.

\subsection{Intermediate states}

ODE solvers are used for the continuation of wave curves. They yield correct result providing the appropriate fields for that wave curve segment (be it a rarefaction, a composite or a shock) are well defined at every passing state. The main result of this work is the reformulation of fields for the removal of singularities in a number of relevant scenarios, thereby defining appropriate procedures for the continuation of wave curves. 
	
A Riemann solution is obtained as a sequence of concatenated wave curves $\mathcal{W}_i : \Re \rightarrow \Re^n$, $i=1,\cdots,m$, parametrized by $\xi_i$ and continued from initial state $\mathcal{W}_i(0) = U_i^-$ which must also be the last point of the previous curve $\mathcal{W}_{i-1}$. The specific field used during the continuation of each wave curve $\mathcal{W}_i$ depends on wave type (e.g. rarefaction, composite or shock wave curves) and admissibility criteria must be verified during the construction of each wave curve segment.

The solution of a Riemann problem from state $U_L$ to $U_R$ is given by a chain of concatenated wave curves represented as by
\begin{equation}
	 \mathcal{W}_m(\cdots,\mathcal{W}_2(\mathcal{W}_1(U_L,\xi_1),\xi_2),\cdots,\xi_m)=U_R,
\end{equation} 
where $\xi_1, \xi_2, \dots, \xi_m$ are the parameters for wave curve segments and each wave after the second starts at some intermediate state $U_i^-$ which is the end of the previous curve in the sequence. 

The heart of the problem lies on finding the states where each curve segment ends and determining the appropriate numerical procedure for the continuing on with the next wave curve.

\subsection{Stopping criteria}

Integration along an integral curve stops only when it reaches either a physical boundary or some the bifurcation curve. Detection algorithms for both these situations have been described in detail along Sections \ref{coin1}, \ref{plane} and \ref{subsect:stop_inflection}.

\section{Conclusion}
\label{conclu}

	We introduced a change of variables, based on a generalized Jordan chain, in order to analyze resonance phenomena in systems of conservation laws. Based on this analysis, we propose a procedure for continuing wave curves beyond points where two characteristic speeds coincide. The continuation method for constructing wave curves is improve by solving numerical difficulties at some singularities and we present a theoretical argument for the existence of wave curves after their passage through a coincidence locus. One of the fundamental contributions is a proposal on how to take into account	the derivatives of flow and accumulation functions to better define the step for the integrator to advance in the neighborhood of points where resonance happens. 
	
	We prove the existence of composite wave curve when it traverse either the inflection locus or an anomalous part of the non-local composite wave curve. Another situation, regarding the construction of composite wave curves in anomalous points is left as for future works.
	
	The theoretical and numerical difficulties studied in this work, as well as their solutions, are illustrated by a series of relevant examples.\\

\textbf{Acknowledgements}

Special thanks to Teresa Braga de Queiroz are due for \gui{the preparation of}
some figures presented in this work. The authors \gui{are grateful to Professors} Bradley Plohr (Los Alamos National Laboratory) and Frederico Furtado (University of Wyoming) for key recommendations\gui{, and also thank the key} contributors to the RPN program: Edson Lima, Bradley Plohr and Rodrigo Morante. 

The first author's work was supported in part by IMPA/CAPES, while the second author work was supported in part by \gui{FAPERJ under Grant 202.574/2016}. 
The third author  was supported in part FINEP 01.13.0390.00/01.CNPq  304264/2014-8.
FAPERJ E-26/ 202.764/2017 cientista. E-26/010.001267/2016 Pronex.

\appendix

\section{Special vectors at the coincidence locus}
\label{apend1}

Equation \eqref{unoa1} can rewritten as

\begin{align}
\label{df1}
A(U)R_o(U)&=(\lambda_o+s(U))B(U)R_o(U)+p(U)B(U)R_1(U),\\
\label{df2}
A(U)R_1(U)&=(\lambda_o+s(U))B(U)R_1(U)+B(U)R_o(U).
\end{align}

At a point $U_o$ in the intersection surface $\left\lbrace U: \lambda_i(U)=\lambda_{i+1}(U) \right\rbrace$, we have $s(U_o)=p(U_o)=0$, $R_o(U_o)=r_o$ and
$R_1(U_o)=r_1$. We use the shortened notation $A(U_o)=A_o$ and $B(U_o)=B_o$.

Differentiating \eqref{df1} and \eqref{df2} with respect to $U_k$ at point $U_o$, we obtain
\begin{equation}
(A_o-\lambda_oB_o)\frac{\partial R_o}{\partial U_k}=\frac{\partial s}{\partial U_k}B_or_o+\frac{\partial p}{\partial U_k}B_or_1+\lambda_o\frac{\partial B}{\partial U_k}r_o-\frac{\partial A}{\partial U_k}r_o,
\label{df4}
\end{equation}
\begin{equation}
(A_o-\lambda_oB_o)\frac{\partial R_1}{\partial U_k}=\frac{\partial B}{\partial U_k}r_o+B_o\frac{\partial R_o}{\partial U_k}+\frac{\partial s}{\partial U_k}B_or_1-\frac{\partial A}{\partial U_k}r_1+\lambda_o\frac{\partial B}{\partial U_k}r_1.
\label{df5}
\end{equation}
Using the generalized Jordan chain in equations \eqref{gen4}-\eqref{gen3a1}, multiplying \eqref{df4} by left eigenvector $l_o$ and using that  $l_o(A_o-\lambda_oB_o)=0$ we obtain
\begin{equation}
\frac{\partial p}{\partial U_k}=l_o\frac{\partial A}{\partial U_k}r_o-\lambda_o l_o\frac{\partial B}{\partial U_k}r_o.
\label{chanap}
\end{equation}
The derivative $\frac{\partial s}{\partial U_k}$ is calculated by taking the sum of equations \eqref{df4} and \eqref{df5} pre-multiplied by $l_1$ and $l_o$, respectively and using identities \eqref{gen4}-\eqref{gen3a1} as
\begin{equation}
\frac{\partial s}{\partial U_k}=\frac{1}{2}\left(l_o\frac{\partial A}{\partial U_k}r_1+l_1 \frac{\partial A}{\partial U_k}r_o\right)-\frac{\lambda_o}{2}\left(l_o\frac{\partial B}{\partial U_k}r_1+l_1 \frac{\partial B}{\partial U_k}r_o\right).
\end{equation} 
Explicit formulas for  $\frac{\partial R_o}{\partial U_k}$ and $\frac{\partial R_1}{\partial U_k}$ can be obtained from equations \eqref{df4} and \eqref{df5}.
In order to solve these equations to obtain the derivatives of $R_0$ and $R_1$, we first define
\begin{equation}
Z=A_o-\lambda_oB_o+B_or_1l_1B_o, 
\label{ApA:def_G}
\end{equation}
such that for a given $b$, there is a unique $x=Z^{-1}b$ satisfying $l_1B_ox=0$ and $(A_o-\lambda_oB_o)x=b$ (see \cite{seyranian2003multiparameter} ).
Moreover, from \eqref{gen4}-\eqref{gen3a1} we have
\begin{equation}
Zr_o=Br_1, \quad Zr_1=Br_o, \quad l_oZ=l_1B_o \quad l_1Z=l_oB_o.
\end{equation}
Then, from equations \eqref{df4} and \eqref{df5} we obtain
\begin{equation}
\frac{\partial R_o}{\partial U_k}=\frac{\partial s}{\partial U_k}r_1+\frac{\partial p}{\partial U_k}r_o+Z^{-1}\left(\lambda_o\frac{\partial B}{\partial U_k}r_o-\frac{\partial A}{\partial U_k}r_o\right),
\label{df4a1}
\end{equation}
\begin{equation}
\frac{\partial R_1}{\partial U_k}=\frac{\partial s}{\partial U_k}r_o+Z^{-1}\left (\frac{\partial B}{\partial U_k}r_o+B_o\frac{\partial R_o}{\partial U_k}-\frac{\partial A}{\partial U_k}r_1+\lambda_o\frac{\partial B}{\partial U_k}r_1\right).
\label{df5b11}
\end{equation}

Finally, $R_o(U)$ and $R_1(U)$ can be approximated using Taylor's formula to first order

	\begin{equation}
	\begin{split}
	& R_o(U) = r_o+\sum_{k=1}^{n}\frac{\partial R_o}{\partial U_k}(U^k-U_o^k) + o(||U-U_o||^2), \\
	& R_1(U) = r_1+\sum_{k=1}^{n}\frac{\partial R_1}{\partial U_k}(U^k-U_o^k) + o(||U-U_o||^2),
	\end{split}
	\end{equation}
where $U=(U^1,\dots,U^n)$ and $U_o=(U_{o}^1,\dots,U_{o}^n)$.

%A formula for the derivative of an eigenvector can be found using the normalization condition $l_0^T B_0 r = const$, which implies that
%\begin{equation} \label{ApB:derivative_of_normalization}
%l_0^T B_0 \frac{\partial r}{\partial U^-_j} = 0.
%\end{equation}
%Let $\bar{l_o}$ be the complex conjugate of $l_o$. After multiplying \eqref{ApB:derivative_of_normalization} to the left by $\bar{l}$ into \eqref{ApB:eigenvalue_derivative_temp1}

%formulas Delta 2, y derivada deo autovalores deben ser dadas

\bibliographystyle{plain}
  
\bibliography{resonancia2}

\end{document}